%
\documentclass[11pt]{amsart}

\hoffset=-0.5in
\voffset=-0.2in
\textwidth=6in

\usepackage{amsmath,amssymb,amsthm}
\usepackage{amsfonts}
\usepackage[mathscr]{eucal}
\pagestyle{plain}

\newtheorem{theorem}{Theorem}[section]
\newtheorem{lemma}[theorem]{Lemma}
\newtheorem{prop}[theorem]{Proposition}

\theoremstyle{definition}

\theoremstyle{remark}
\newtheorem{remark}[theorem]{Remark}

\numberwithin{equation}{section}


\newcommand{\RR}[1]{\mathbb{#1}}

\newcommand{\rr}{{\mathbb R}}

\def\R{{\mathbb R}}
\def\N{{\mathbb N}}

\def\C{{\mathbb C}}

\def\a{\alpha}

\def\E{{\mathbb E}}
\def\P{{\mathbb P}}

\begin{document}

\title{\bf Large deviations for local time fractional Brownian
motion and applications}

\author{Mark M. Meerschaert}
\address{Mark M. Meerschaert, Department of Probability and Statistics,
Michigan State University, East Lansing, MI 48823}
\email{mcubed@stt.msu.edu}
\urladdr{http://www.stt.msu.edu/$\sim$mcubed/}
\thanks{Research of M. M. Meerschart was partially
supported by NSF grant DMS-0706440.}

\author{Erkan Nane}
\address{Erkan Nane, Department Statistics and Probability,
Michigan State University, East Lansing, MI 48823}
\email{nane@stt.msu.edu}
\urladdr{http://www.stt.msu.edu/$\sim$nane}

\author{Yimin Xiao}
\address{Yimin Xiao, Department Statistics and Probability,
Michigan State University, East Lansing, MI 48823}
\email{xiao@stt.msu.edu}
\urladdr{http://www.stt.msu.edu/$\sim$xiaoyimi}
\thanks{Research of Y. Xiao was partially supported by
NSF grant DMS-0706728.}

\begin{abstract}
Let $W^H=\{W^H(t), t \in \rr\}$ be a fractional Brownian motion of Hurst
index $H \in (0, 1)$ with values in $\rr$, and let $L = \{L_t, t \ge 0\}$
be the local time process at zero of a strictly stable L\'evy process
$X=\{X_t, t \ge 0\}$ of index $1<\alpha\leq 2$ independent of $W^H$.
The $\a$-stable local time fractional Brownian motion $Z^H=\{Z^H(t),
t \ge 0\}$ is defined by $Z^H(t) = W^H(L_t)$. The process $Z^H$ is
self-similar with self-similarity index $H(1 - \frac 1 \alpha)$ and is
related to the scaling limit of a continuous time random walk with
heavy-tailed waiting times between jumps (\cite{coupleCTRW,limitCTRW}).
However, $Z^H$ does not have stationary increments and is non-Gaussian.

In this paper we establish large deviation results for the process $Z^H$.
As applications we derive upper bounds for the uniform modulus of continuity
and the laws of the iterated logarithm for $Z^H$.
\end{abstract}

\keywords{Fractional Brownian motion, L\'{e}vy process, strictly stable
process, local time, large deviation, self-similarity,
modulus of continuity, law of the iterated logarithm.}

\maketitle

\section{Introduction}

Self-similar processes arise naturally in limit theorems of random walks and
other stochastic processes, and they have been applied to model various phenomena
in a wide range of scientific areas including telecommunications, turbulence,
image processing and finance. The most important example of self-similar
processes is fractional Brownian motion (fBm) which is a centered Gaussian process
$W^H=\{W^H(t), t \in \rr\}$ with $W^H(0) = 0$ and covariance function
\begin{equation}\label{Eq:fbmcov}
\E\big(W^H(s) W^H(t)\big) = \frac 1 2 \Big(|s|^{2H} + |t|^{2H} - |s-t|^{2H}\Big),
\end{equation}
where $H \in (0, 1)$ is a constant. By using \eqref{Eq:fbmcov} one can verify that
$W^H$ is self-similar with index $H$ (i.e., for all constants $c > 0$, the
processes $\{W^H(ct), t \in \rr\}$ and $\{c^H W^H(t), t \in \rr\}$ have the same
finite-dimensional distributions) and has stationary increments. When $H=1/2$,
$W^H$ is a two-sided Brownian motion, which will be written as $W$.

Many authors have constructed and investigated various classes of non-Gaussian
self-similar processes. See, for example, \cite{ST94} for information on self-similar
stable processes with stationary increments. Burdzy \cite{burdzy1, burdzy2}
introduced the so-called \emph{iterated Brownian motion} (IBM) by replacing the time
parameter in $W$ by an independent one-dimensional Brownian motion $B= \{B_t,
t \ge 0\}$. His work inspired  many researchers to explore the
connections between IBM (or other iterated processes) and PDEs
\cite{allouba1, allouba2, bmn-07,nane3}, to establish potential theoretical
results \cite{bandeb,deblassie,nane,nane2,nane5,nane6}
and to study its sample path properties \cite{burdzy-khos,CF,CCFR3,CCFR,hu-2,
hu,koslew, xiao}.

In this paper, we consider another class of iterated self-similar
processes which is related to continuous-time random walks considered in
\cite{coupleCTRW,limitCTRW}. Let $W^H=\{W^H(t), t \in \rr\}$ be a fractional
Brownian motion of Hurst index $H \in (0, 1)$ with values in $\rr$. Let $X =
\{X_t, t \ge 0\}$ be a real-valued, strictly stable L\'evy process of index
$1<\alpha\leq 2$. We assume that $X$ is independent of $W^H$. Let
$L= \{L_t, t \ge0\}$ be the local time process at zero of $X$ (see
Section 2 for its definition). Let $Z^H= \{Z^H(t), t \ge 0\}$ be a real-valued
stochastic process defined by $Z^H(t) = W^H(L_t)$ for all $t \ge 0$. This
iterated process will be called an \emph{$\a$-stable local time $H$-fractional
Brownian motion} or simply \emph{local time fractional Brownian motion}.

Since the sample functions of $W^H$ and $L$ are a.s. continuous, the local
time fractional Brownian motion $Z^H$ also has continuous sample functions.
Moreover, by using the facts that $W^H$ is self-similar with index $H$ and
$L$ is self-similar with index $1-1/\alpha$, one can readily verify that
$Z^H$ is self-similar with index $H(1 - 1/\alpha)$. However, $Z^H$ is
non-Gaussian, non-Markovian and does not have stationary increments.
When $H=1/2$, we will call $Z^{1/2}$ the local time Brownian motion and
denote it by $Z$ for convenience.

The local time Brownian motion $Z$ emerges as the scaling
limit of a continuous time random walk with heavy-tailed waiting times
between jumps \cite{coupleCTRW, limitCTRW}. Moreover, local time Brownian
motion has a close connection to fractional partial
differential equations. Baeumer and Meerschaert \cite{fracCauchy} showed
that the process $Z$ can be
applied to provide a solution to the fractional Cauchy problem. More precisely,
they proved that, if $L_t$ is the local time at 0 of a symmetric stable
L\'evy process, then $u(t,x)=\E_x [f(W(L_t))]$ solves the following fractional
in time PDE
\begin{equation}\label{frac-derivative-0}
\frac{\partial^\beta}{\partial t^\beta}u(t,x) =
{\Delta_x}u(t,x); \quad u(0,x) = f(x),
\end{equation}
where $\beta =1-1/\alpha$ and  $\partial^{\beta} g(t)/\partial t^\beta $ is
the Caputo fractional derivative in time, which can be defined as the inverse
Laplace transform of $s^{\beta}\tilde{g}(s)-s^{\beta -1}g(0)$, where
$\tilde{g}(s)=\int_{0}^{\infty}e^{-st}g(t)dt$ is the usual Laplace
transform. Recently Baeumer, Meerschaert and Nane \cite{bmn-07} further
established the
equivalence of the governing PDEs of $W(L_t)$ and $W(|B_{t}|)$ when
$\alpha=2$ and $\beta=1/2$. Here $B=\{B_t, t \ge 0\}$ is another Brownian
motion independent of $W$ and $X$. The process $Z$ has also appeared in
the works of Borodin \cite{borodin1, borodin2}, Ikeda and Watanabe
\cite{ike-wata}, Kasahara \cite{kasahara}, and Papanicolaou et al.
\cite{papanicolaou}. In \cite{CCFR2}, Cs\'{a}ki,  F\"{o}ldes and
R\'{e}v\'{e}sz  studied the Strassen type law of the iterated logarithm
of $Z(t) = W(L_t)$ when $L_t$ is the local time at zero of a symmetric
stable L\'{e}vy process.

For all $H \in (0, 1)$ and $\alpha \in (1, 2]$, $\a$-stable local time
$H$-fractional Brownian motions form a new class of self-similar processes.
It is natural to expect that they arise as scaling limit of continuous-time
\emph{correlated} random walks with heavy-tailed waiting times and, as such,
they are potentially useful as stochastic models. Hence it is of interest in
both theory and applications to investigate their probabilistic
and analytic properties. Due to the non-Gaussian and non-Markovian nature
of local time fractional Brownian motions, the existing theories on Markov
and/or Gaussian processes can not be applied to them directly and some new
tools will have to be developed. The literature on iterated Brownian motion
mentioned above provides an instructive guideline for studying local time
fractional Brownian motions.

The objective of the present paper is to establish large deviation results
for the local time fractional Brownian motion $Z^H$ and apply them to study
regularity properties of the sample paths of $Z^H$. We will consider the
interesting problem of determining the domain of attraction of $Z^H$ in a
subsequent paper.

\medskip

The following Theorems \ref{Thm:Zt-LDP} and \ref{probability-thm} are our
main results. 

\begin{theorem}\label{Thm:Zt-LDP}
Let $Z^H= \{Z^H(t), t \ge 0\}$ be an $\a$-stable local time $H$-fractional
Brownian motion with values in $\rr$ and $2H < \alpha$. Then for every Borel set
$D\subseteq \rr$,
\begin{equation}\label{Eq:LDPup}
\limsup_{t \to \infty} t^{- \frac{2H(\alpha - 1)} {\alpha - 2H}}\, \log \P\Big\{
t^{- \frac{2H(\alpha - 1)} {\alpha - 2H}} Z^H(t) \in D\Big\}
\le - \inf_{x \in \overline{D}}\Lambda^*_1(x)
\end{equation}
and
\begin{equation}\label{Eq:LDPlow}
\liminf_{t \to \infty} t^{- \frac{2H(\alpha - 1)} {\alpha - 2H}}\, \log \P\Big\{
t^{- \frac{2H(\alpha - 1)} {\alpha - 2H}} Z^H(t) \in D\Big\}
\ge - \inf_{x \in {D}^{\circ}}\Lambda^*_1(x),
\end{equation}
where $\overline{D}$ and $D^{\circ} $ denote respectively the closure and
interior of $D$ and
\begin{equation}\label{Eq:Lambdastar}
\Lambda^*_1(x) =
\frac{\alpha + 2H} {2 \alpha} \left(\frac{\alpha - 2H}
{2 \alpha B_1}\right)^{\frac{\alpha - 2H} {\alpha + 2H}}\, x^{\frac{2\alpha}
{\alpha + 2H}}, \qquad  \forall\ x \in \R.
\end{equation}
In the above,  $B_1 = B_1(H, \alpha, \chi, \nu)$ is the positive constant defined by
\begin{equation}\label{Eq:ConstB}
B_1= \frac{\alpha - 2H} {2\alpha} \left( \frac{H\, A_1^{\alpha} }
{(1 - \frac 1 \alpha)^{\alpha -1} }\right)^{\frac{2H} {\alpha - 2H}},
\end{equation}
where $A_1$ is the constant given by
\begin{equation}\label{Eq:A1}
A_1 = \frac{\Gamma(1-\frac 1 \alpha)\Gamma(\frac 1 \alpha)\chi^{1/\alpha}
\cos (\frac 1 \alpha \arctan
(\nu \tan (\frac{\pi \alpha}{2})))}{\pi \alpha [1+(\nu\tan
(\frac{\pi \alpha}{2}))^2]^{1/(2\alpha)}}
\end{equation}
and  $\nu \in [-1,1]$ and $ \chi>0$ are the parameters of the stable L\'evy
process $X$ defined in (\ref{Eq:Chf}).
\end{theorem}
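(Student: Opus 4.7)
The plan is to exploit self-similarity and independence of $W^H$ and $L$ to reduce the LDP to a one-variable problem, and then verify the hypotheses of the G\"artner--Ellis theorem by computing the asymptotic logarithmic moment generating function of $Z^H(t)$ through a Laplace-type expansion controlled by the tail of $L_1$.

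Conditionally on $L_t$, $W^H(L_t)$ is centered Gaussian with variance $L_t^{2H}$, so $Z^H(t) \stackrel{d}{=} L_t^H \xi$ with $\xi\sim N(0,1)$ independent of $L$. Combining this with the self-similarity of $L$ of index $1-1/\alpha$ gives the distributional identity
\[
Z^H(t) \stackrel{d}{=} t^{\beta_0}\, L_1^H\, \xi, \qquad \beta_0 := H(1-1/\alpha),
\]
and therefore $t^{-\gamma_0} Z^H(t) \stackrel{d}{=} t^{\beta_0-\gamma_0} L_1^H \xi$ for $\gamma_0 := 2H(\alpha-1)/(\alpha-2H)$. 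The LDP is thus reduced to an asymptotic understanding of the tails of the single variable $L_1^H\xi$.

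Integrating out the Gaussian gives $\log\E[\exp(\theta Z^H(t))] = \log\E[\exp(\tfrac{1}{2}\theta^2 t^{2\beta_0} L_1^{2H})]$. The key analytic input is the sharp upper tail of $L_1$. Since the inverse local time $T_s := \inf\{t: L_t>s\}$ is a $(1-1/\alpha)$-stable subordinator whose scale and skewness are determined by the parameters $(\chi,\nu,\alpha)$ of $X$ via \eqref{Eq:Chf}, a classical saddle-point / de~Bruijn Tauberian estimate for stable subordinators together with the scaling $\{L_1 > \ell\} = \{T_1 < \ell^{-\alpha/(\alpha-1)}\}$ yields
\[
-\log\P(L_1 > \ell) \;\sim\; \frac{(1-1/\alpha)^{\alpha-1}}{A_1^{\alpha}}\, \ell^{\alpha}, \qquad \ell \to \infty,
\]
with $A_1$ the constant from \eqref{Eq:A1}. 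Because $2H < \alpha$, the function $c\ell^{2H} - (1-1/\alpha)^{\alpha-1} A_1^{-\alpha}\ell^{\alpha}$ has a unique finite maximum, and Laplace's method produces a sharp asymptotic for $\log\E[\exp(cL_1^{2H})]$ as $c\to\infty$. Setting $c=\tfrac12\theta^2 t^{2\beta_0}$ and using the identity $2\beta_0\cdot \alpha/(\alpha-2H) = \gamma_0$ delivers a finite nontrivial limit
\[
\Lambda(\theta) \;:=\; \lim_{t\to\infty} t^{-\gamma_0}\log\E[e^{\theta Z^H(t)}] \;=\; C\,|\theta|^{2\alpha/(\alpha-2H)}
\]
for an explicit constant $C = C(H,\alpha,A_1)$ read off from the Laplace expansion.

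Since $\Lambda$ is real-analytic, essentially smooth and strictly convex on $\R\setminus\{0\}$, the G\"artner--Ellis theorem yields \eqref{Eq:LDPup}--\eqref{Eq:LDPlow} at speed $t^{\gamma_0}$ with rate function $\Lambda^*(x) = \sup_\theta[\theta x - \Lambda(\theta)]$. A direct Legendre-transform computation with conjugate exponents $p=2\alpha/(\alpha-2H)$ and $q=2\alpha/(\alpha+2H)$ gives a power-law rate $\Lambda^*(x) = C'|x|^{q}$; algebraic simplification, with $B_1$ from \eqref{Eq:ConstB} serving as a convenient intermediate quantity, recovers precisely $\Lambda^*_1$ displayed in \eqref{Eq:Lambdastar}. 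The main obstacle is the first step: securing the tail of $L_1$ with the correct explicit constant built from $A_1$. This requires identifying the scale and skewness of the subordinator $T$ in terms of $(\chi,\nu)$ from \eqref{Eq:Chf} and a precise small-time density expansion for a stable subordinator; once that is in place, the Laplace expansion, G\"artner--Ellis verification, and Legendre bookkeeping are routine, although the constants must be tracked meticulously in order to match $B_1$ and $\Lambda^*_1$ exactly.
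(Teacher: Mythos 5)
Your overall architecture matches the paper's: condition on $L$ to reduce to $\E\exp(\theta Z^H(t)) = \E\exp\big(\tfrac{\theta^2}{2}t^{2H(1-1/\alpha)}L_1^{2H}\big)$, identify the limit $\Lambda_1(\theta)$ of the normalized logarithmic moment generating function, and then invoke the G\"artner--Ellis theorem plus a Legendre-transform computation. Where you genuinely diverge is in the key analytic step, namely computing $\lim_{c\to\infty}c^{-\alpha/(\alpha-2H)}\log\E\exp(cL_1^{2H})$. The paper never touches the tail of $L_1$ at this stage: it computes the moments $\E(L_1^{2Hn})$ exactly (Lemma \ref{Lem:LevyLT} together with Jensen's inequality for non-integer powers), applies Stirling's formula, and then uses Valiron's theorem to read off the growth of the entire function $M_1(r)=\sum_n \E(L_1^{2Hn})r^n/n!$ directly from its Taylor coefficients. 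You instead take as input the sharp upper tail $-\log\P(L_1>\ell)\sim K\ell^\alpha$ via the inverse local time (a $(1-1/\alpha)$-stable subordinator) and a Hawkes-type small-time estimate, and then run a Laplace/Varadhan optimization of $c\ell^{2H}-K\ell^\alpha$, where $2H<\alpha$ guarantees finiteness and the needed exponential moment condition. Both routes are viable; the paper's is more self-contained (the tail behaviour of $L_1$ is obtained there only afterwards, as a corollary via Davies' theorem), whereas yours front-loads a nontrivial external input --- identifying the subordinator's scale parameter in terms of $(\chi,\nu)$ --- which you correctly flag as the main obstacle.

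One concrete correction: the tail constant you wrote is off by a factor of $\alpha$. Consistency with the exact moment formula $\E(L_1^n)=A_1^n\, n!/\Gamma\big(1+n(1-\tfrac1\alpha)\big)$ (or with the Brownian check $\alpha=2$, $\chi=2$, where $L_1\stackrel{d}{=}|N(0,1)|$ and $A_1=1/\sqrt2$) forces
\[
-\log\P\{L_1>\ell\}\;\sim\;\frac{(1-\frac1\alpha)^{\alpha-1}}{\alpha\, A_1^{\alpha}}\,\ell^{\alpha},
\]
not $(1-\frac1\alpha)^{\alpha-1}A_1^{-\alpha}\ell^{\alpha}$. With the corrected $K$ the Laplace optimization reproduces exactly $\Lambda_1(\theta)=B_1\theta^{2\alpha/(\alpha-2H)}$ and hence the rate function \eqref{Eq:Lambdastar}; with your $K$ the constants would fail to match $B_1$.
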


In (\ref{Eq:Lambdastar}) and the sequel,  for any $\gamma >0$ and $x\in \R$,
the term $x^{2\gamma}$ is defined as $(x^2)^{\gamma}$. Since $2H < \alpha$,
one can see that the function $\Lambda^*_1$ in  (\ref{Eq:Lambdastar}) is even,
convex and differentiable on $\R$.

In the terminology of \cite{DemboZ98}, Theorem \ref{Thm:Zt-LDP} states that the
pair $\big(t^{- \frac{2H(\alpha - 1)} {\alpha - 2H}} Z^H(t),
t^{\frac{2H(\alpha - 1)} {\alpha - 2H}}\big)$ satisfies a large deviation principle
with good rate function $\Lambda^*_1$.  When $H=1/2$,
it yields a large deviation result for the local time Brownian motion and, moreover,
the constants $B_1$ in (\ref{Eq:ConstB}) can be simplified.

Letting $D = [x, \infty)$, we derive from Theorem \ref{Thm:Zt-LDP} and the
self-similarity of $Z^H$ the asymptotic tail probability $\P\big\{Z^H(1) \ge x\big\}$
as $x \to \infty$. The following theorem is more general because it holds for all
$H \in (0, 1)$ and $\alpha \in (1, 2]$.

\begin{theorem}\label{probability-thm}
Let $Z^H= \{Z^H(t), t \ge 0\}$ be an $\a$-stable local time $H$-fractional
Brownian motion with values in $\rr$. Then for any $0\leq a\leq b<\infty$,
\begin{equation}\label{Eq:LD}
\lim_{x \to\infty} \frac{\log \P \left\{ \big|Z^H(b)-Z^H(a)\big|>x
\right\}}{x^{\frac{2\alpha}{\alpha + 2H}}} = - B_2,
\end{equation}
where $B_2 = B_2(H, \alpha, \chi, \nu)$ is the positive constant defined by
\begin{equation}\label{Eq:conB2}
B_2 = \frac{\alpha + 2H} {2 \alpha}\, \left(\frac{H\, {A}_1^\alpha}
{\big(1 - \frac 1 \alpha\big)^{\alpha - 1}}\,
\right)^{- \frac{2 H} {\alpha + 2H}}\, \big(b-a\big)^{- \frac{2H(\alpha - 1)}
{\alpha + 2H}}.
\end{equation}
\end{theorem}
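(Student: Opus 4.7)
The plan is to reduce the problem to a conditional Gaussian tail estimate and then apply a Laplace-type saddle-point analysis driven by the upper tail of $L_b - L_a$. Since $L$ is nondecreasing a.s.\ and $W^H$ has stationary Gaussian increments with variance $|s-t|^{2H}$, conditioning on the $\sigma$-algebra $\mathcal{F}^L$ generated by $L$ gives
\begin{equation*}
\P\big\{|Z^H(b) - Z^H(a)|>x\big\} = 2\,\E\big[\overline{\Phi}\big(x/(L_b - L_a)^H\big)\big],
\end{equation*}
where $\overline{\Phi}(y) = \P\{N(0,1) > y\}$. Hence the asymptotics are controlled by the trade-off between the Gaussian factor $\exp(-x^2/(2\lambda^{2H}))$ and the upper tail of $L_b - L_a$.

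The key preliminary I would establish is
\begin{equation*}
\log \P\big(L_b - L_a > \lambda\big) \sim -K\,(b-a)^{-(\alpha-1)}\,\lambda^\alpha \qquad (\lambda \to \infty),
\end{equation*}
with $K = (1-1/\alpha)^{\alpha-1}/(\alpha A_1^\alpha)$. For $a = 0$ this follows from self-similarity of $L$ with index $1-1/\alpha$ together with the identity $\{L_t > \lambda\} = \{\tau_\lambda < t\}$, where $\tau$ is the inverse local time, a $(1-1/\alpha)$-stable subordinator whose Laplace exponent is explicit in $(\alpha, \chi, \nu)$; the small-time large-deviation asymptotics of $\tau_1$ (e.g.\ via Kasahara's exponential Tauberian theorem or the Legendre transform) yield the stated tail, with the constant $A_1$ from \eqref{Eq:A1} entering through this Laplace exponent. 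For $a > 0$, I would restart at the first hit $\sigma \ge a$ of $0$ by $X$ (which is recurrent since $1 < \alpha \le 2$), so that $L_b - L_a = L_b - L_\sigma$ has the same distribution as $L_{b-\sigma}$ with $\sigma - a$ tight; the resulting asymptotic differs from the $a=0$ case only by negligible corrections on the large-deviation scale.

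With this tail in hand, the saddle-point step is routine. Using the Mills-ratio sandwich on $\overline{\Phi}$, the problem on the logarithmic scale reduces to
\begin{equation*}
\sup_{\lambda > 0} \Big(-\tfrac{x^2}{2\lambda^{2H}} - K\,(b-a)^{-(\alpha-1)}\,\lambda^\alpha\Big),
\end{equation*}
attained at $(\lambda^\ast)^{\alpha+2H} = Hx^2(b-a)^{\alpha-1}/(\alpha K)$. The upper bound comes from partitioning the range of $L_b - L_a$ dyadically around $\lambda^\ast$ and inserting the tail estimate, while the lower bound comes from restricting the expectation to $\{L_b - L_a \in [\lambda^\ast, 2\lambda^\ast]\}$ and applying the matching lower estimate on this probability. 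Substituting $\lambda^\ast$ back into the sup and using $\tfrac12+\tfrac H\alpha = \tfrac{\alpha+2H}{2\alpha}$ together with the identity $(H/(\alpha K))^{-2H/(\alpha+2H)} = (\alpha K/H)^{2H/(\alpha+2H)}$ collapses everything to exactly $-B_2\,x^{2\alpha/(\alpha+2H)}$ with $B_2$ as in \eqref{Eq:conB2}.

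The principal obstacle is matching the prefactor $K$ in the tail of $L_b - L_a$ to that of $L_{b-a}$ when $a > 0$: the exponent $\lambda^\alpha$ and the scaling $(b-a)^{-(\alpha-1)}$ follow easily from self-similarity and the monotone bound $L_b - L_a \le L_b$, but pinning down the exact constant requires a uniform-in-starting-position control over the stable local time. Once this is in place, the saddle-point computation is a standard Laplace--Varadhan exercise, and the explicit form of $B_2$ emerges from the algebra.
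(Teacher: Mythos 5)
Your proposal is correct in outline and reaches the right constant (I checked that your $K=(1-\frac1\alpha)^{\alpha-1}/(\alpha A_1^{\alpha})$ agrees with the tail of $L_b-L_a$ that the paper itself derives in its Theorem \ref{moment-generating-lim-local} and Remark following it, and that substituting your $\lambda^\ast$ into the variational problem does collapse to $-B_2 x^{2\alpha/(\alpha+2H)}$), but it is a genuinely different route from the paper's. The paper never conditions on $L$ at the level of tail probabilities: it computes all moments of $|Z^H(b)-Z^H(a)|$ explicitly (Lemma \ref{moment}), uses Valiron's theorem to show that $t\mapsto \E\exp\big(t|Z^H(b)-Z^H(a)|^{\beta}\big)$ is entire of very regular growth with explicit order $\rho$ and type $B_3$ (Theorem \ref{moment-generating-lim}), and then invokes Davies' Tauberian-type theorem to convert growth of the moment generating function directly into the tail asymptotics \eqref{Eq:LD}. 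That route is analytically soft: it never needs the exact-constant large deviation tail of $L_b-L_a$ as an input, nor a two-scale optimization. Your route is more probabilistic and more transparent about the mechanism --- it identifies the optimal local-time value $\lambda^\ast$ realizing the tail event --- and it would extend to outer processes that are not Gaussian but have known tails; the price is that you must prove the exact-constant tail of $L_b-L_a$ for $a>0$, which is the one place your sketch is thin. There the right statement is the two-sided sandwich
$\P(\sigma\le a+\ep)\,\P(L_{b-a-\ep}>\lambda)\le \P(L_b-L_a>\lambda)\le \P(L_{b-a}>\lambda)$
obtained from the strong Markov property at the first zero $\sigma$ of $X$ after time $a$ (the local time is constant on $[a,\sigma]$, and $L_{b}-L_{\sigma}$ given $\sigma=s$ is a fresh copy of $L_{b-s}$); since $\P(\sigma\le a+\ep)$ is a fixed positive constant, it is negligible on the $\lambda^{\alpha}$ scale, and letting $\ep\downarrow 0$ matches the constants --- no genuinely ``uniform-in-starting-position'' estimate is needed. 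The remaining Laplace--Varadhan step (dyadic slicing for the upper bound, restriction to $[\lambda^\ast,2\lambda^\ast]$ for the lower bound, Mills ratio losing only a logarithm) is routine as you say. Note also that the paper's Remark \ref{Re:KM} sketches yet a third proof via K\"onig--M\"orters, and its Remark after Theorem \ref{moment-generating-lim-local} acknowledges the subordinator connection you use for the $a=0$ local-time tail.
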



In order to prove Theorems \ref{Thm:Zt-LDP} and \ref{probability-thm}, we first
study the analytic properties of the moment generating functions of $Z^H(t)$ and
$|Z^H(b)-Z^H(a)|$. This is done by calculating the moments of $Z^H(t)$ and
$|Z^H(b)-Z^H(a)|$ for $0\leq a\leq b $ directly and by using a theorem of
Valiron \cite{valiron}. Then Theorems \ref{Thm:Zt-LDP} and \ref{probability-thm}
follow respectively from the G\"artner-Ellis Theorem
(cf. \cite{DemboZ98}) and a result of Davies \cite{davies}.

The rest of the paper is organized as follows. In $\S$2, we derive
sharp estimates on the moments of the local time $L_t$ of $X$, and the moments of
$Z^H(b)-Z^H(a)$. These estimates are applied in $\S$3 to study the analyticity
of the moment generating functions of $Z^H$, and to derive large time behavior of
the logarithmic moment generating functions $\log \E\big[\exp(\theta Z^H(t))\big]$ and
$\log \E\big[\exp(t|Z^H(b)-Z^H(a)|^\beta)\big]$ for suitably chosen $\beta> 0$.
In $\S$ 4, we prove Theorem \ref{Thm:Zt-LDP} and \ref{probability-thm}.
We will also establish similar tail estimates for the maximum
$\max_{t \in [a, b]}|Z^H(t)-Z^H(a)|$. In $\S$5, by combining the large deviation
result with the methods in \cite[Theorem 3.1]{csaki-csorgo}, we establish
local and uniform moduli of continuity for $Z^H$. We also obtain an upper bound in
the law of the iterated logarithm for $Z^H$.

\bigskip

\noindent{\bf Acknowledgment}\ This paper is finished while Y. Xiao is
visiting the Statistical \& Applied Mathematical Sciences Institute
(SAMSI). He thanks the staff of SAMSI for their
support and the good working conditions.

\section{Moment estimates}

A L\'{e}vy process $X=\{X_t, t \ge 0\}$ with values in $\rr$ is called strictly
stable of index $\alpha \in (0, 2]$ if its characteristic function is given by
\begin{equation}\label{Eq:Chf}
\E\big[\exp (i\xi X_t)\big] = \exp\left(-t |\xi|^{\alpha} \frac{1+i \nu {\rm sgn}
(\xi)\tan (\frac{\pi\alpha}{2})}{\chi}\right),
\end{equation}
where $-1\leq  \nu \leq 1$ and $\chi >0$ are constants. In the terminology of
\cite[Definition 1.1.6]{ST94},
$\nu$ and $\chi^{-1/\a}$ are respectively the skewness and scale parameters
of the stable random variable $X_1$. When $\a=2$ and $\chi = 2$, $X$ is Brownian
motion. In general, many properties of stable L\'evy processes can be
characterized by the parameters $\alpha, \nu$ and $\chi$. For a systematic account
on L\'evy processes we refer to \cite{Bertoin96}.

For any Borel set $I\subseteq \RR{R}$, the occupation measure of $X$ on $I$
is defined by
\begin{equation}\label{occup-measure}
\mu_{I}(A)=\lambda_{1}\{t\in I: \ \ X_t \in A\}
\end{equation}
for all Borel sets $A\subseteq \rr$, where $\lambda_{1}$ is the one-dimensional
Lebesgue measure. If $\mu_{I}$ is absolutely
continuous with respect to the Lebesgue measure $\lambda_{1}$ on
$\RR{R}$, we say that $X$ has a local time on $I$ and
define its local time $L(x,I)$ to be the Radon-Nikod\'ym derivative
of $\mu_{I}$ with respect to $\lambda_1$, i.e.,
\[
    L(x,I) = \frac{d\mu_{I}} {d\lambda_1}(x),\qquad \forall x\in\rr.
\]
In the above, $x$ is the so-called \emph{space variable}, and $I$
is the \emph{time variable} of the local time. If $I=[0,t]$, we will
write $L(x,I)$ as $L(x,t)$. Moreover, if $x=0$ then we will simply write
$L(0,t)$ as $L_t$.

By using a monotone class argument, one can verify that $L(x, I)$
satisfies the following \emph{occupation
density formula}: For every measurable function $f : \R \to \R_+$,
\begin{equation}\label{Eq:occupation}
\int_I f(X(t))\, d t = \int_{\R} f(x) L(x, I)\, dx.
\end{equation}

It is well-known (see, e.g. \cite{Bertoin96}) that a strictly stable L\'evy process
$X$ has a local time if and only if $\a \in (1, 2]$. In the later case, $L(x, t)$
has a version that is continuous in $(x, t)$. Throughout this paper, we tacitly
work with such a version so that the local time process $L=\{L_t, t \ge 0\}$
has continuous sample paths.

It follows from (\ref{Eq:occupation}) and the
self-similarity of $X$ that $L(x, t)$ has the following scaling property: For every
constant $c > 0$, $c^{1 - 1/\alpha}L(c^{-1/\alpha}x, \, c^{-1}t)$ is a version of
$L(x, t)$. In particular, by letting $x=0$ we see that $L_t$ is self-similar with
index $1 - \frac 1 \alpha$.

For the purpose of the present paper, it will be convenient to
express the local time $L(x,t)$ as the inverse Fourier transform
of $\widehat{\mu}(u, t):= \widehat{\mu_{[0, t]}}(u)$, namely
\begin{equation}\label{local-time-def}
\begin{split}
L(x,t)&= \frac{1}{2\pi} \int_{\RR{R}}\exp(-iux)\widehat{\mu}(u,t)\, du\\
&= \frac{1}{2\pi} \int_{0}^{t}\int_{\RR{R}}
\exp\big(-iux + iuX(s)\big)\, duds.
\end{split}
\end{equation}
This formal expression can be justified rigorously (see \cite{gem-hor}).
Moreover, it follows from (25.2) and (25.7) in \cite{gem-hor} that
for all $x\in \RR{R}$, $I\in \mathcal{B}(\RR{R}_{+})$ and all integers $n\geq
1$, we have
\begin{equation}\label{local-time-moment}
\E\big[L(x,I)\big]^{n}=
(2\pi)^{-n}\int_{I^{n}}\int_{\RR{R}^{n}}\exp\bigg(-i\sum_{j=1}^{n}u_{j}x\bigg)
\E\exp \bigg(i\sum_{j=1}^{n} u_{j}X(t_{j})\bigg)\, d\bar{u}d\bar{t}.
\end{equation}
In the above and in the sequel, $d\bar{u}=du_1 \cdots du_n$ and $d\bar{t}=
dt_1 \cdots dt_n$.

We start with the following moment estimates for the local time of $X$.

\begin{lemma}\label{Lem:LevyLT}
Let $L = \{L_t, t \ge 0\}$ be the local time at zero of a strictly stable
L\'evy  process $X= \{X_t, t \ge 0\}$ with values in $\rr$ and index
$1<\alpha\leq 2$. Then for all $0< a\leq b<\infty$ and all integers $n\geq 1$,
\begin{equation}\label{Eq:Ltmoments}
\begin{split}
\bigg(\frac{b-a}{b}\bigg)^{1/\alpha}\, \frac{A_1^n\, n!\, (b-a)^{n(1-1/\alpha)}}
{\Gamma\big(1 - \frac 1 \alpha\big) \Gamma\big(1 + \frac 1 \alpha +
n(1-\frac 1 \alpha)\big)}
&\leq \E\big[ \big| L_b-L_a\big|^{n} \big]\\
&  \leq \bigg(\frac{b-a}{b}\bigg)^{1/\alpha}\, \frac{A_1^n\, n!\,
(b-a)^{n(1-1/\alpha)}}
{\Gamma\big(1 + n(1-\frac 1 \alpha)\big)},
\end{split}
\end{equation}
where $A_1 >0$ is the constant defined by (\ref{Eq:A1}).
In the case $a=0$, we have the equality
\begin{equation}\label{Eq:a-zero}
\E\big[ |L_b|^{n} \big] =  \frac{A_1^{n} \,n!}
{\Gamma\big(1+n(1-\frac 1 \alpha)\big)}\, b^{n(1-1/\alpha)}.
\end{equation}
\end{lemma}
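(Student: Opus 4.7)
The plan is to start from the Fourier-type moment identity \eqref{local-time-moment} with $x=0$ and $I=[a,b]$, exploit symmetry in the time variables and independence of the increments of $X$ to decouple the $n$-fold inner integral into a product of one-dimensional Fourier integrals, evaluate each of these in closed form, and finally reduce the resulting multiple integral to a Dirichlet integral. This yields the exact identity \eqref{Eq:a-zero} when $a=0$ and two-sided bounds of the form \eqref{Eq:Ltmoments} when $a>0$.

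\textbf{Symmetrization and the Fourier factor.} I first restrict \eqref{local-time-moment} to the ordered region $a \le t_1 \le \cdots \le t_n \le b$, acquiring a factor $n!$. Setting $t_0:=0$ and introducing $v_k := u_k + u_{k+1} + \cdots + u_n$ (a unit-Jacobian change of variables on $\R^n$), I rewrite $\sum_j u_j X(t_j) = \sum_k v_k\bigl(X(t_k)-X(t_{k-1})\bigr)$. Independence of increments and \eqref{Eq:Chf} then factor the inner expectation into a product of $n$ one-dimensional integrals of the form
\begin{equation*}
\frac{1}{2\pi} \int_{\R} \exp\!\left(-\tau |v|^\alpha\, \frac{1 + i\nu\,{\rm sgn}(v)\tan(\tfrac{\pi\alpha}{2})}{\chi}\right) dv = \frac{A_1}{\Gamma(1 - 1/\alpha)\, \tau^{1/\alpha}}, \qquad \tau = t_k - t_{k-1}.
\end{equation*}
This identity is obtained by splitting the integral at the origin, evaluating each half as a complex-parameter Gamma integral $\int_0^\infty e^{-\tau c v^\alpha}dv = \Gamma(1/\alpha)/[\alpha(\tau c)^{1/\alpha}]$ with $c = (1+i\nu\tan(\pi\alpha/2))/\chi$ lying in the right half-plane (principal branch of $c^{1/\alpha}$), and combining the two complex-conjugate contributions into the cosine appearing in \eqref{Eq:A1}. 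Substituting back produces the exact representation
\begin{equation*}
\E\bigl|L_b - L_a\bigr|^n = \frac{n!\, A_1^n}{\Gamma(1-1/\alpha)^n} \int_{a \le t_1 \le \cdots \le t_n \le b} \frac{dt_1 \cdots dt_n}{t_1^{1/\alpha}\, \prod_{k=2}^{n}(t_k-t_{k-1})^{1/\alpha}}.
\end{equation*}

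\textbf{Reduction to one dimension and bounds.} For $a=0$ the substitution $s_k := t_k - t_{k-1}$ turns this into a classical Dirichlet integral over $\{s_k \ge 0,\, \sum_{k=1}^n s_k \le b\}$, whose value $\Gamma(1-1/\alpha)^n b^{n(1-1/\alpha)}/\Gamma(1+n(1-1/\alpha))$ gives \eqref{Eq:a-zero} at once. For $a>0$ I integrate out $t_2,\ldots,t_n$ first, again by Dirichlet on the inner simplex of size $b-t_1$, producing the factor $\Gamma(1-1/\alpha)^{n-1}(b-t_1)^{(n-1)(1-1/\alpha)}/\Gamma(1+(n-1)(1-1/\alpha))$. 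The remaining one-variable integral, after $t_1 = b-(b-a)s$, equals
\begin{equation*}
(b-a)^{n(1-1/\alpha)+1/\alpha}\int_0^1 \bigl(b-(b-a)s\bigr)^{-1/\alpha}\, s^{(n-1)(1-1/\alpha)}\, ds.
\end{equation*}
The key elementary inequality $b(1-s) \le b - (b-a)s \le b$ for $s\in[0,1]$ (which holds precisely because $0\le a\le b$) yields both sides of \eqref{Eq:Ltmoments}: bounding $(b-(b-a)s)^{-1/\alpha}$ below by $b^{-1/\alpha}$ and computing $\int_0^1 s^{(n-1)(1-1/\alpha)}\,ds$ gives the lower estimate; bounding it above by $b^{-1/\alpha}(1-s)^{-1/\alpha}$ and invoking the Beta function gives the upper one. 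In both directions the factor $b^{-1/\alpha}(b-a)^{1/\alpha} = \bigl((b-a)/b\bigr)^{1/\alpha}$ emerges naturally, and the surviving Gamma factors consolidate via $\Gamma(z+1)=z\Gamma(z)$ into the precise denominators stated in \eqref{Eq:Ltmoments}.

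\textbf{Main obstacle.} The most delicate step is the Fourier evaluation above: the complex coefficient $c$ demands careful branch-of-$c^{1/\alpha}$ bookkeeping in the right half-plane (equivalently, a contour rotation from the positive real axis to the ray of argument $\arg(c)/\alpha$) in order to match the precise constant $A_1$ in \eqref{Eq:A1}. Once this constant is correctly identified, the remainder of the proof is standard Dirichlet/Beta integration together with the single elementary inequality $b(1-s) \le b - (b-a)s$ responsible for the scale factor $((b-a)/b)^{1/\alpha}$.
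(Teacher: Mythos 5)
Your proof is correct and follows essentially the same route as the paper: symmetrize over the ordered simplex, telescope via $v_k=\sum_{l\ge k}u_l$, evaluate the one-dimensional Fourier integral to get $C(\alpha)\tau^{-1/\alpha}$ with $C(\alpha)=A_1/\Gamma(1-\tfrac1\alpha)$, reduce by iterated Beta/Dirichlet integration to a single integral in $t_1$, and bound that integral using exactly the comparison $b(1-s)\le b-(b-a)s\le b$ (the paper's \eqref{Eq:fupb}--\eqref{Eq:flowb} in the variable $v=(b-a)s/b$). The only cosmetic differences are that you evaluate the Fourier integral by a contour rotation of a complex Gamma integral where the paper cites Gradshteyn--Ryzhik 3.944(6), and you parametrize the last integral by $t_1=b-(b-a)s$ rather than first shifting $s_1=t_1-a$; both yield identical constants.
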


\begin{proof}\
Applying (\ref{local-time-moment}) with $x=0$ and $I = (a, b]$
and making a change of variables $s_j=t_j -a$ ($j=1, \cdots, n$), we obtain
\begin{equation}\label{stationary0}
\begin{split}
\E\big[|L_b - L_b|^n\big]&=
(2\pi)^{-n}\int_{(a,b]^{n}}\int_{\RR{R}^{n}} \E\exp \left(
i\sum_{j=1}^{n}u_{j}X(t_{j})\right)d\bar{u}d\bar{t}\\
&= \frac{n!} {(2\pi)^{n}}\int_{S_{n}}\int_{\RR{R}^{n}} \E\exp\left(
i\sum_{j=1}^{n}u_{j}X(s_{j}+a)\right)\,d\bar{u}d\bar{s},
\end{split}
\end{equation}
where
$$
S_n=\big\{(s_{1},\cdots , s_{n}):\ 0\leq s_{1}\leq s_{2}\leq \cdots
\leq s_{n} \leq b-a \big\}.
$$

Let $v_j=\sum_{l=j}^{n}u_l$ ($j=1, \cdots, n$), then the last sum in
(\ref{stationary0}) can be written as
\begin{equation}\label{stationary}
\sum_{j=1}^{n}u_{j}X(s_{j}+a) = \sum_{j=1}^{n}v_{j}\big(X(s_{j}+a)-X(s_{j-1}+a) \big),
\end{equation}
where $s_0 :=-a$ so that $s_0 + a=0$. Denote
$$
\phi(\xi)=|\xi|^{\alpha} \frac{1+i \nu\, {\rm sgn} (\xi)\tan
(\frac{\pi\alpha} 2)}{\chi}.
$$
Since the process $X$ has stationary and independent
increments,  we have
\begin{equation}\label{Eq:S2}
\begin{split}
&\E\exp \left(i\sum_{j=1}^{n}v_{j}\big(X(s_{j}+a)-X(s_{j-1}+a)\big)\right) \\
&= \exp\left(-\sum_{j=2}^{n}(s_j-s_{j-1})\phi(v_{j})\right)
\exp\big(-(s_1 +a)\phi (v_1)\big).
\end{split}
\end{equation}
It follows from (\ref{stationary0}), (\ref{stationary}), (\ref{Eq:S2})
and a change of variables that
\begin{equation}\label{Eq:S3}
\begin{split}
\E\big[|L_b - L_a|^n\big] &= \frac{n!}{(2\pi)^{n}}\int_{S_n}\int_{\RR{R}^{n}}
\exp\left(-\sum_{j=2}^{n}(s_j-s_{j-1}) \phi(v_{j})\right)\\
 &\qquad \qquad \qquad \times \exp\big(-(s_1 +a)\phi (v_1)\big)\,
 d\bar{v}d\bar{s},
 \end{split}
 \end{equation}
where $d\bar{v} = dv_1\cdots dv_n$.

We know from symmetry that for any $s>0$:
$$
\int_{\RR{R}}\exp\big(-s\, \phi(v)\big)\,dv =
2 \int_{0}^{\infty}\exp\bigg(- \frac{s\, v^{\alpha}} \chi\bigg)
\cos \bigg(- \frac{s v^{\alpha} \nu} \chi\, \tan \big( \frac{\alpha\pi}
2 \big)\bigg)\, dv.
$$
By a change of variable $z=v^{\alpha}$, this equals
$$
\frac{2}{\alpha}\int_{0}^{\infty}z^{1/\alpha -1}
\exp \bigg(- \frac{s\,z} \chi \bigg)
\cos \bigg(-\frac{s\,z \nu} \chi\, \tan \big(\frac{\alpha\pi} 2 \big)\bigg)\,dz,
$$
which by Equation 3.944(6) in Gradsteyn and Ryzhik \cite{grad-ryzhik} equals
$$
\frac{2 \chi^{1/\alpha} \Gamma (\frac 1 \alpha) }{\alpha\, s^{1/\alpha}
\big(1+[\nu\tan(\frac{\alpha \pi} 2)]^2\big)^{1/(2\alpha)}}\cos \left(\frac{1}{\alpha}
\arctan \big(\nu \tan(\frac{\alpha\pi} 2)\big)\right).
$$

Combining the above with (\ref{Eq:S3}) we obtain
\begin{equation}\label{time-integral}
\E\big[|L_b - L_a|^n\big]= n!\, C(\alpha)^n\int_{S_n}(s_1+a)^{-1/\alpha}\prod_{j=2}^{n}
(s_j-s_{j-1})^{-1/\alpha}\, d\bar{s},
\end{equation}
where $C(\alpha)$ is the constant given by
\begin{equation}\label{Eq:C}
C(\alpha)=\frac{\chi^{1/\alpha} \Gamma (\frac 1 \alpha)}{\pi \alpha (1+
[\nu\tan(\frac{\alpha \pi} 2)]^2)^{1/(2\alpha)}}\cos \left(\frac{1}{\alpha}
\arctan \big(\nu \tan(\frac{\alpha\pi} 2)\big)\right).
\end{equation}

We denote the multiple integral in (\ref{time-integral}) by $J_n$. When
$a=0$, it can be evaluated in terms of the Gamma function. When $a>0$
the same induction method can still be applied. We include a proof for
completeness.

First we integrate over $s_n \in [s_{n-1},\, b-a]$ to get
$$
\int_{s_{n-1}}^{b-a}(s_n-s_{n-1})^{-1/\alpha}ds_{n}=
\frac{(b-a-s_{n-1})^{1-1/\alpha}}{1- \frac 1 \alpha}.
$$
Next we integrate over $s_{n-1}\in [s_{n-2},\, b-a]$. By changing
variables twice $v=s_{n-1}-s_{n-2}$ and $t=v/(b-a-s_{n-2})$, we obtain
\begin{eqnarray}
& &\int_{s_{n-2}}^{b-a}(s_{n-1}-s_{n-2})^{-1/\alpha}
  \frac{(b-a-s_{n-1})^{1-1/\alpha}}{ 1- \frac 1 \alpha }\,
ds_{n-1}\nonumber\\
& & =\frac{1}{1- \frac 1 \alpha}
\int_{0}^{b-a-s_{n-2}}v^{-1/\alpha}(b-a-s_{n-2}-v)^{1-1/\alpha}dv\nonumber\\
& &=\frac{(b-a-s_{n-2})^{2(1-1/\alpha)}}{1- \frac 1 \alpha}
\int_{0}^{1}t^{(1-1/\alpha)-1}(1-t)^{(2-1/\alpha)-1}\, dt\nonumber\\
& &
=\frac{(b-a-s_{n-2})^{2(1-1/\alpha)}}{1- \frac 1 \alpha} \cdot\frac{\Gamma
(1- \frac 1 \alpha) \Gamma(2- \frac 1 \alpha)}{\Gamma (1+2(1- \frac 1 \alpha))
}\nonumber\\
&& =\frac{\Gamma
(1- \frac 1 \alpha)^2}{\Gamma
(1+2(1- \frac 1 \alpha)) }\, (b-a-s_{n-2})^{2(1-1/\alpha)}.\nonumber
\end{eqnarray}
Iterating this procedure, we derive
\begin{equation}\label{general-integral}
\begin{split}
J_{n} &= \frac{\Gamma (1- \frac 1 \alpha)^{n-1}}{\Gamma
(1+(n-1)(1- \frac 1 \alpha))}\int_{0}^{b-a} (b-a
-s_{1})^{(n-1)(1-1/\alpha)}(s_1+a)^{-1/\alpha}\, ds_1\\
&= \frac{\Gamma (1- \frac 1 \alpha)^{n-1}} {\Gamma
(1+(n-1)(1- \frac 1 \alpha))} \, b^{n(1-1/\alpha)} \int_{0}^{(b-a)/b}
 v^{(n-1)(1-1/\alpha)}(1-v)^{-1/\alpha}\, dv,
\end{split}
\end{equation}
where the second equality follows from change of variables.



If $a=0$, then the last integral equals
$\mathrm{Beta}\big(1- \frac 1 \alpha,\,1+(n-1)(1- \frac 1 \alpha) \big)$,
where $\mathrm{Beta}$ denotes the
Beta function.  This and (\ref{general-integral}) yield
\begin{equation}\label{Eq:Jn}
J_n=  \frac{\Gamma \big(1- \frac 1 \alpha \big)^n}
{\Gamma \big(1+n(1- \frac 1 \alpha)\big)}\, b^{n(1-1/\alpha)}.
\end{equation}

It follows from (\ref{time-integral}) and (\ref{Eq:Jn}) that for $a=0$,
\begin{equation}\label{Eq:LTa0}
\E\big[ |L_b - L_a|^n\big]=   \frac{\Gamma \big(1- \frac 1 \alpha \big)^{n}\,
C(\alpha)^n\, n!}{\Gamma \big(1+ n(1- \frac 1 \alpha) \big)}\, b^{n(1-1/\alpha)}.
\end{equation}
By (\ref{Eq:A1}) and (\ref{Eq:C}), we have $A_1 = \Gamma
\big(1- \frac 1 \alpha \big)\, C(\alpha)$. Hence the desired result (\ref{Eq:a-zero})
follows from (\ref{Eq:LTa0}).

If $a>0$, then last integral in (\ref{general-integral}) is an
incomplete Beta function \cite[8.384(1) and 9.100]{grad-ryzhik}. Write
\begin{equation}\label{Eq:f-f}
\begin{split}
f(n,a,b) &= \int_{0}^{(b-a)/b}  v^{(n-1)(1-1/\alpha)}(1-v)^{-1/\alpha}\, dv\\
&= \bigg(\frac {b- a} b \bigg)^{n (1 - 1/\alpha) + 1/\alpha}\,
\int_0^1 v^{(n-1)(1-1/\alpha)}\left(1- \frac{b-a} b v\right)^{-1/\alpha}\, dv.
\end{split}
\end{equation}
Then one can verify that
\begin{equation}\label{Eq:fupb}
\begin{split}
f(n,a,b) &\le \bigg( \frac {b-a}b\bigg)^{n (1 - 1/\alpha) + 1/\alpha} \int_0^{1}
v^{(n-1)(1-1/\alpha)} (1 - v)^{-1/\alpha}\, dv\\
&= \bigg(\frac {b- a} b \bigg)^{n (1 - 1/\alpha) + 1/\alpha}
\frac{\Gamma(1 - \frac 1 \alpha) \Gamma(1 + (n-1)(1- \frac 1 \alpha))}
{\Gamma(1 + n(1- \frac 1 \alpha))}
\end{split}
\end{equation}
and
\begin{equation}\label{Eq:flowb}
\begin{split}
f(n,a,b) &\ge  \bigg( \frac {b-a}b\bigg)^{n (1 - 1/\alpha)+ 1/\alpha } \int_0^{1}
v^{(n-1)(1-1/\alpha)} \, dv\\
&= \bigg(\frac {b- a} b \bigg)^{ n (1 - 1/\alpha)+ 1/\alpha} \frac{1}
{1+(n-1)(1- \frac 1 \alpha)}.
\end{split}
\end{equation}

It follows from (\ref{time-integral}), (\ref{general-integral})
and \eqref{Eq:fupb} that
\begin{equation}\label{Eq:Lupb}
\begin{split}
\E\big[|L_b - L_a|^n\big]&\leq \bigg(\frac{b-a} b\bigg)^{1/\alpha}\, \frac{\Gamma
(1- \frac 1 \alpha)^{n}\,C(\alpha)^n\, n!}
{ \Gamma \big(1+ n(1- \frac 1 \alpha)\big)}\, (b-a)^{n(1-1/\alpha)}.
\end{split}
\end{equation}
Recalling $A_1 = \Gamma (1- \frac 1 \alpha)\,C(\alpha)$, we see that
(\ref{Eq:Lupb}) gives the upper bound in (\ref{Eq:Ltmoments}).

Similarly, we use (\ref{time-integral}), (\ref{general-integral})
and \eqref{Eq:flowb} to derive
\begin{equation}\label{Eq:Llowb}
\E\big[|L_b - L_a|^n\big]\geq  \bigg(\frac{b-a} b\bigg)^{1/\alpha}
\frac{\Gamma (1- \frac 1 \alpha)^{n-1}\, C(\alpha)^n\, n!}{\Gamma
(1+ \frac 1 \alpha + n(1- \frac 1 \alpha))}\, (b-a)^{n(1-1/\alpha)},
\end{equation}
which yields the lower bound in (\ref{Eq:Ltmoments}). This proves Lemma
\ref{Lem:LevyLT}.
\end{proof}

Now we consider the moments of the increment $Z^H(b)-Z^H(a)$. As we
will see in Remark \ref{Re:KM}, the following lemma is sufficient for
proving (\ref{Eq:LD}) in Theorem \ref{probability-thm}.

\begin{lemma}\label{moment}
Let $W^H = \{W^H(t), t \in \rr\}$ be a fractional Brownian motion of index
$H$ in $\rr$, and $L_t$ be the local time at zero of a strictly stable
process $X = \{X_t, t \ge 0\}$ of
index $1<\alpha\leq 2$ independent of $W^H$. Then for all $0< a\leq b<\infty$
and all positive integers $n$,
\begin{equation}\label{a-positive}
\begin{split}
 C_1(n)\, (b-a)^{n(1-1/\alpha)} \leq
\E\left( \big|W^H(L_b)-W^H(L_a)\big|^{n/H} \right) \le
C_2(n)\, (b-a)^{n(1-1/\alpha)},
\end{split}
\end{equation}
where
\begin{equation}\label{Eq:C-Hn}
C_{1}(n)= \frac{1} {\sqrt{\pi} \Gamma\big(1 - \frac 1 \alpha\big)}\,
\bigg(\frac{b-a}{b}\bigg)^{1/\alpha}\,
\big( 2^{1/(2H)}\,A_1\big)^{ n}\,
\frac{n! \Gamma\big(\frac n{2H} + \frac 1 2\big)}
{\Gamma\big(1+ \frac 1 \alpha + n(1-\frac 1 \alpha)\big)}
\end{equation}
and
\begin{equation}\label{Eq:C-Hn2}
C_{2}(n)= \frac{1} {\sqrt{\pi}}\,\bigg(\frac{b-a}{b}\bigg)^{1/\alpha}\,
\big( 2^{1/(2H)}\,A_1\big)^{n}\,
\frac{n! \Gamma\big(\frac n{2H} + \frac 1 2\big)}
{\Gamma\big(1+ n(1-\frac 1 \alpha)\big)}.
\end{equation}
In the above $A_1 >0$ is the constant defined by (\ref{Eq:A1}).
Moreover, when $a=0$ we have the equality
\begin{equation}\label{a-zero}
\E\left( \big|W^H(L_b)\big|^{n/H} \right) =
\frac1 {\sqrt{\pi}}\,\big( 2^{1/(2H)}\,A_1\big)^{ n}\,
\frac{n! \Gamma\big(\frac n{2H} + \frac 1 2\big)}
{\Gamma\big(1+n(1-\frac 1 \alpha)\big)} \,
 b^{n(1-1/\alpha)}.
\end{equation}
\end{lemma}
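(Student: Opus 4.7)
The plan is to reduce the problem to the moment estimates of Lemma~\ref{Lem:LevyLT} by conditioning on the stable local time process $L$ and exploiting the independence of $W^H$ and $L$ together with the stationarity of the increments of $W^H$.

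First, I would note that since $L=\{L_t,\ t\ge 0\}$ is non-decreasing, $L_b\ge L_a$ almost surely, and conditionally on the $\sigma$-field $\mathcal{F}^L$ generated by $L$, the random variable $W^H(L_b)-W^H(L_a)$ is (by independence of $W^H$ and $L$, and by the stationary Gaussian increments of $W^H$) a centered Gaussian variable with variance $(L_b-L_a)^{2H}$. Hence, writing $c_p:=\E|N|^p$ for $N\sim\mathcal{N}(0,1)$ and recalling that
\begin{equation*}
c_p = \frac{2^{p/2}\,\Gamma\!\bigl(\tfrac{p+1}{2}\bigr)}{\sqrt{\pi}},
\end{equation*}
I would take $p=n/H$ and condition to get
\begin{equation*}
\E\!\left[\bigl|W^H(L_b)-W^H(L_a)\bigr|^{n/H}\,\bigl|\,\mathcal{F}^L\right]
= c_{n/H}\,(L_b-L_a)^{n}
= \frac{2^{n/(2H)}\,\Gamma\!\bigl(\tfrac{n}{2H}+\tfrac12\bigr)}{\sqrt{\pi}}\,(L_b-L_a)^{n}.
\end{equation*}

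Next I would take expectations of both sides and apply Lemma~\ref{Lem:LevyLT}. Using the upper bound on $\E[(L_b-L_a)^n]$ in \eqref{Eq:Ltmoments} yields exactly the constant $C_2(n)$ in \eqref{Eq:C-Hn2}, and the lower bound yields $C_1(n)$ in \eqref{Eq:C-Hn}. This is where all the $\alpha$-, $\chi$-, $\nu$-dependence is absorbed through $A_1$; the Gaussian factor $2^{n/(2H)}\Gamma(\tfrac n{2H}+\tfrac12)/\sqrt\pi$ comes from the conditioning step and the factor $(b-a)^{n(1-1/\alpha)}\cdot((b-a)/b)^{1/\alpha}$ from the local-time moment. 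The case $a=0$ is handled identically except that \eqref{Eq:a-zero} is used in place of the two-sided inequality \eqref{Eq:Ltmoments}, giving the equality \eqref{a-zero}.

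There is essentially no hard step here: the only subtlety is the conditioning argument, namely verifying that conditionally on $\mathcal{F}^L$ the random variable $W^H(L_b)-W^H(L_a)$ is Gaussian with variance $(L_b-L_a)^{2H}$. This rests on the standard fact that if $Y$ is independent of a centered Gaussian process $G$ with stationary increments and $s\le t$ are $\sigma(Y)$-measurable, then conditionally on $Y$ the increment $G(t)-G(s)$ is $\mathcal{N}(0,(t-s)^{2H})$; I would justify this by a Fubini/regular conditional probability argument (or equivalently by computing the conditional characteristic function). Once that is in hand, Lemma~\ref{moment} is an immediate consequence of Lemma~\ref{Lem:LevyLT}.
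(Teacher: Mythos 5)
Your proposal is correct and follows essentially the same route as the paper: a conditioning argument reducing $\E\big(|W^H(L_b)-W^H(L_a)|^{n/H}\big)$ to $\E\big(|W^H(1)|^{n/H}\big)\,\E\big(|L_b-L_a|^{n}\big)$ via the independence and the stationary, $H$-self-similar Gaussian increments of $W^H$, followed by the standard Gaussian absolute-moment formula and the two-sided bounds of Lemma \ref{Lem:LevyLT} (and \eqref{Eq:a-zero} for $a=0$). The constants match exactly, so nothing is missing.
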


\begin{proof}\
Since $L_t$ is a non-decreasing process, $W^H$ is $H$-self-similar
with stationary increments, and these two processes are independent,
we use a conditioning argument to derive
\begin{equation}\label{Eq:ZH-mom}
\E\left( \big|W^H(L_b)-W^H(L_a)\big|^{n/H}
\right)= \E\left(|W^H(1)|^{n/H}\right)\, \E\big(|L_b-L_a|^{n}\big).
\end{equation}
On the other hand, since $W^H(1)$ has a standard normal density, one can
use a change of variables to verify that
\begin{equation}\label{Eq:tauH}
\E\left(|W^H(1)|^{n/H}\right)=\frac{1} {\sqrt{\pi}}\, 2^{n/(2H)}\,
\Gamma \bigg( \frac n {2H} + \frac 1 2 \bigg).
\end{equation}
Combining (\ref{Eq:ZH-mom}), (\ref{Eq:tauH}) with \eqref{Eq:Ltmoments}
and \eqref{Eq:a-zero} proves (\ref{a-positive}) and (\ref{a-zero}).
\end{proof}

\section{Analytic results: exponential integrability}

In this section we study the exponential integrability of the random
variable $Z^H(t)$ and some analytic properties of its logarithmic
moment generating function. Our main results of this section are
Theorems \ref{Thm:Cumulant} and \ref{moment-generating-lim},  which
are the main ingredients for proving Theorems \ref{Thm:Zt-LDP} and
\ref{probability-thm}.

\begin{theorem}\label{Thm:Cumulant}
Let $Z^H= \{Z^H(t), t \ge 0\}$ be an $\a$-stable local time $H$-fractional
Brownian motion with values in $\rr$ and $2H < \alpha$. Then for every
$\theta \in \R$,
\begin{equation}\label{Eq:cumu}
\lim_{t \to \infty} t^{- \frac{2H(\alpha - 1)} {\alpha - 2H}}\,
\log \E\exp\big(\theta Z^H(t)\big) = \Lambda_1(\theta),
\end{equation}
where $\Lambda_1$ is the function on $\R$ defined by
$\Lambda_1(\theta) =B_1\, \theta^{\frac{2 \alpha} {\alpha - 2H}}$ for all
$\theta \in \R$. Recall from (\ref{Eq:ConstB}) and (\ref{Eq:A1}) that the constants
$B_1 = B_1(H, \alpha, \chi, \nu)$ and  $A_1$ are defined as
\begin{equation}\label{Eq:ConstB0}
B_1= \frac{\alpha - 2H} {2\alpha} \left(
\frac{H\, A_1^\alpha} {(1 - \frac 1 \alpha)^{\alpha -1}}\right)^{\frac{2H} {\alpha - 2H}}
\end{equation}
and
\begin{equation}\label{Eq:A1b}
A_1 = \frac{\Gamma(1-\frac 1 \alpha)\Gamma(\frac 1 \alpha)\chi^{1/\alpha}
\cos (\frac 1 \alpha \arctan
(\nu \tan (\frac{\pi \alpha}{2})))}{\pi \alpha [1+(\nu\tan
(\frac{\pi \alpha}{2}))^2]^{1/(2\alpha)}},
\end{equation}
where $\nu \in [-1,1]$
and $\chi>0$ are the constants defined in (\ref{Eq:Chf}).
\end{theorem}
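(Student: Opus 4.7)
The plan is to reduce Theorem~\ref{Thm:Cumulant} to an asymptotic statement for the moment generating function of $Z^H(1)$, and then to invoke a theorem of Valiron~\cite{valiron}. Since $W^H$ is symmetric about $0$ and independent of $L$, the law of $Z^H(t)$ is symmetric, so we may take $\theta>0$; by the self-similarity of $Z^H$ with index $H(1-1/\alpha)$,
\[
\E e^{\theta Z^H(t)} \;=\; F\bigl(\theta\,t^{H(1-1/\alpha)}\bigr), \qquad F(\eta):=\E e^{\eta Z^H(1)}.
\]
Because $H(1-1/\alpha)\cdot\frac{2\alpha}{\alpha-2H}=\frac{2H(\alpha-1)}{\alpha-2H}$, the theorem reduces to the pure asymptotic statement
\[
\lim_{\eta\to\infty}\eta^{-2\alpha/(\alpha-2H)}\,\log F(\eta) \;=\; B_1.
\]

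To analyse $F$, condition on $L_1$: since $W^H(s)\sim N(0,s^{2H})$ is independent of $L_1$,
\[
F(\eta)\;=\;\E\exp\!\bigl(\tfrac12\eta^2\,L_1^{2H}\bigr)\;=\;G(\eta^2),\qquad G(w)\;:=\;\sum_{k\ge0}\frac{w^k}{2^k\,k!}\,\E L_1^{2kH}.
\]
The integer moments of $L_1$ produced by Lemma~\ref{Lem:LevyLT} identify $L_1$ as a Mittag--Leffler variable with parameter $1-1/\alpha$, and the formula
\[
\E L_1^s \;=\; \frac{A_1^s\,\Gamma(s+1)}{\Gamma(1+s(1-1/\alpha))}, \qquad s\geq 0,
\]
is either a standard fact about local times of stable L\'evy processes or can be recovered by rerunning the calculation in the proof of Lemma~\ref{Lem:LevyLT} with $n$ replaced by a real exponent. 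Applying Stirling's formula to the three Gamma factors appearing in $b_k:=\E L_1^{2kH}/(2^k k!)$ yields
\[
b_k\;\sim\;\frac{1}{\sqrt{2\pi k(1-1/\alpha)}}\,M^k\,k^{-k(\alpha-2H)/\alpha}, \qquad k\to\infty,
\]
for an explicit constant $M=M(H,\alpha,A_1)$ made out of the Stirling main terms.

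Valiron's theorem then identifies $G$ as an entire function of order $\rho=\alpha/(\alpha-2H)$ and type $\tau=M^{\rho}/(e\rho)$. Since $b_k\geq 0$, the maximum modulus of $G$ on $\{|z|=w\}$ is attained at the positive real point, giving $\log G(w)\sim\tau\,w^{\rho}$ and hence $\log F(\eta)\sim\tau\,\eta^{2\alpha/(\alpha-2H)}$; that this is a true limit (not just a limsup) follows from the regular form of the Stirling asymptotics, which match the template $(e\rho\tau/k)^{k/\rho}$, and equivalently from a direct Laplace/saddle-point calculation on the series. The concluding step is purely algebraic: expanding $M^{\rho}/(e\rho)$ and using $\alpha(1-1/\alpha)=\alpha-1$ together with $(2H)^{2H/(\alpha-2H)}/2^{\alpha/(\alpha-2H)}=\tfrac12\,H^{2H/(\alpha-2H)}$ yields $\tau=B_1$, as defined in \eqref{Eq:ConstB0}. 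The principal obstacle is precisely this constant matching, since three Stirling ratios collude to produce several $k$-exponential contributions, and the powers of $2$, $e$, $H$ and $(1-1/\alpha)$ must be tracked carefully to arrive at $B_1$.
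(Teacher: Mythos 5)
Your proposal follows essentially the same route as the paper: condition on $L_1$ to reduce to the entire function $\E\exp\bigl(zL_1^{2H}\bigr)$, extract the coefficient asymptotics via Stirling, apply Valiron's theorem, and finish with the self-similar change of variables. The only real difference is a technical one: you invoke the exact Mellin-transform formula $\E L_1^s=A_1^s\Gamma(s+1)/\Gamma(1+s(1-1/\alpha))$ for real $s$ (which is true via the Mittag--Leffler identification, though it does not follow by literally ``rerunning'' the integer-moment computation of Lemma \ref{Lem:LevyLT}), whereas the paper sandwiches the non-integer moments $\E L_1^{2Hn}$ between consecutive integer moments by Jensen's inequality, which suffices since the resulting polynomial factors do not affect the Valiron limit.
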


Note that for $2H < \alpha$ the above function $\Lambda_1(\cdot)$ is even,
convex and differentiable on $\R$, and the function $\Lambda^*_1(\cdot)$
defined by (\ref{Eq:Lambdastar}) is the Fenchel-Legendre transform of
$\Lambda_1$, that is, $\Lambda^*_1(x) = \sup_{\theta \in \R}
\big( \theta x - \Lambda_1(\theta)\big)$ for all $x \in \R$.

The proof of Theorem \ref{Thm:Cumulant} relies on explicit calculation
of the moments of $Z^H(t)$ and the following theorem in Valiron
\cite[page 44]{valiron}.

\begin{lemma}\label{Lem:Valiron}
Let $f(z) = \sum_{p=0}^\infty c_p z^p$ be an entire function such that
$c_p \ne 0$ for infinitely many $p$'s. For any $r > 0$, let
$M(r)=\sup_{|z|=r}|f(z)|$.  Then a necessary and
sufficient condition for
\begin{equation}\label{Eq:ValironCon}
\lim_{r\to \infty}\frac{\log M(r)}{r^{\rho}}=B
\end{equation}
is that, for all values of $\varepsilon$ and all sufficiently large
integers $p$, we have
\begin{equation}\label{Eq:ValironCon1}
\frac{1}{\rho e}\, p\, c_{p}^{\rho/p}\leq B+\varepsilon ,
\end{equation}
and  there exists a sequence of integers $p_{n}$, such that
\begin{equation}\label{Eq:ValironCon2}
\lim_{n\to\infty}\frac{p_{n+1}} {p_{n}}=1,
\end{equation}
for which
\begin{equation}\label{Eq:ValironCon3}
\lim_{n\to\infty} \frac{1}{\rho e}\,p_{n}\, c_{p_{n}}^{\rho/p_{n}} =B.
\end{equation}
\end{lemma}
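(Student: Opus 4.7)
The plan is to establish Valiron's classical duality between the coefficient asymptotics of an entire function and its maximum-modulus growth. The two quantities $(1/(\rho e))\, p\, |c_p|^{\rho/p}$ and $\log M(r)/r^\rho$ are linked by a Legendre-transform relation via the optimization $p^\ast = (B\pm\eta)\rho\, r^\rho$ (equivalently $r^\ast = (p/(\rho(B\pm\eta)))^{1/\rho}$), so each direction of the proof amounts to carrying out this optimization sharply, using Cauchy's coefficient inequality in one direction and the power-series bound on $M(r)$ in the other.

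For sufficiency, assume (i) and (ii) and bound $M(r)$ from above and below. For the upper bound, substitute $|c_p| \le ((B+\varepsilon) e \rho/p)^{p/\rho}$ (valid for large $p$ by (i)) into $|f(z)| \le \sum_p |c_p|\, r^p$, locate the maximum term at $p^\ast \approx (B+\varepsilon)\rho r^\rho$ with value $\exp((B+\varepsilon) r^\rho)$, and show by a standard tail estimate that the whole series exceeds this maximum term only by a polynomial factor in $r$; the finitely many low-order terms where (i) need not hold contribute at most $O(r^N)$ for some fixed $N$ and are absorbed into the error. For the lower bound, apply Cauchy's inequality $M(r) \ge |c_p|\, r^p$ with $p = p_n$ drawn from the sequence of (ii) and chosen so that $p_n \approx (B-\eta)\rho\, r^\rho$; the density condition $p_{n+1}/p_n \to 1$ is precisely what guarantees such a $p_n$ can be found for every sufficiently large $r$, yielding $\log M(r) \ge (B - \eta - o(1))\, r^\rho$. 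Letting $\varepsilon,\eta \downarrow 0$ completes this half.

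For necessity, (i) follows directly from Cauchy's inequality: if $\log M(r) \le (B+\varepsilon)r^\rho$ for $r$ large, then $\log |c_p| \le (B+\varepsilon) r^\rho - p\log r$, and minimizing the right-hand side at $r^\rho = p/(\rho(B+\varepsilon))$ gives $|c_p|^{\rho/p} \le e\rho(B+\varepsilon)/p$. For (ii), I would construct the required sequence explicitly by setting $p_n := p^\ast(r_n)$, where $r_n \uparrow \infty$ with $r_{n+1}/r_n \to 1$ and $p^\ast(r)$ is the index achieving the maximum of $p\log r + \log |c_p|$ in the Taylor expansion of $f$ at radius $r$. Monotonicity of $p^\ast(\cdot)$ together with $p^\ast(r)$ growing on the order of $r^\rho$ yields $p_{n+1}/p_n \to 1$ (after thinning if necessary), and the fact that $\log M(r)$ agrees with the log of the maximum term up to an $o(r^\rho)$ error, combined with $\lim \log M(r_n)/r_n^\rho = B$, forces $(1/(\rho e))\, p_n\, c_{p_n}^{\rho/p_n} \to B$ along the constructed sequence.

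The main technical obstacle is the saddle-point step verifying that the maximum term of the Taylor series really captures $\log M(r)$ up to $o(r^\rho)$, since this is what makes the Legendre-transform correspondence between coefficients and maximum modulus tight, and simultaneously compatible with the density requirement $p_{n+1}/p_n \to 1$. The argument requires a Gaussian-type concentration bound around $p^\ast(r)$ controlling the contributions on both sides of the maximum term, and some care is needed because $p^\ast(r)$ is an integer-valued staircase that can occasionally jump by more than one; jumps which are too large would precisely violate the density condition, and one must show that under the hypothesis $\lim\log M(r)/r^\rho = B$ such jumps are asymptotically negligible.
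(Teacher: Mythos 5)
The paper contains no proof of this lemma to compare against: it is quoted from Valiron \cite[p.~44]{valiron} and used as a black box, so your blind attempt is in effect supplying the missing classical argument. Your sketch is the standard maximum-term/central-index (Legendre-duality) proof and is correct in outline: the sufficiency half (coefficient bound inserted into $\sum_p |c_p|r^p$, maximal term of size $\exp((B+\varepsilon)r^\rho)$ at $p^\ast\approx(B+\varepsilon)\rho r^\rho$ plus a polynomial-factor tail estimate; lower bound from $M(r)\ge |c_{p_n}|r^{p_n}$, where \eqref{Eq:ValironCon2} guarantees an admissible $p_n$ within a factor $1+o(1)$ of the optimizing index, losing only $O(\delta^2)$ in the exponent) and the necessity of \eqref{Eq:ValironCon1} via Cauchy's inequality are exactly right. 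For the necessity of \eqref{Eq:ValironCon2}--\eqref{Eq:ValironCon3}, the two points you flag as the ``main technical obstacle'' are settled by standard facts rather than any Gaussian concentration: writing $\mu(r)$ for the maximum term and $\nu(r)=p^\ast(r)$ for the central index, one has $\mu(r)\le M(r)\le \mu(r)\big(\nu(2r)+2\big)$, so $\log M(r)=\log\mu(r)+O(\log r)$ once \eqref{Eq:ValironCon1} gives $\nu(r)=O(r^\rho)$; and since $\nu$ is nondecreasing with $\log\mu(\lambda r)-\log\mu(r)=\int_r^{\lambda r}\nu(t)\,t^{-1}dt$, the assumed regular growth of $\log M$ (hence of $\log\mu$) forces $\nu(r)\sim B\rho\, r^\rho$ upon letting $\lambda\downarrow 1$. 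This asymptotic is what excludes the large staircase jumps you worry about and yields $p_{n+1}/p_n\to1$ from $r_{n+1}/r_n\to 1$; your weaker statement that $p^\ast(r)$ grows ``on the order of $r^\rho$'' would not by itself give the density condition, so this differencing step is genuinely needed. Feeding $p_n\sim B\rho r_n^\rho$ into $\log\mu(r_n)=\log|c_{p_n}|+p_n\log r_n=(B+o(1))r_n^\rho$ then gives \eqref{Eq:ValironCon3} exactly as you assert (alternatively, one can note that $c\mapsto c\,e^{B/c-1}$ is maximized at $c=B$, so \eqref{Eq:ValironCon1} plus the maximum-term identity already pins $p_n\sim B\rho r_n^\rho$). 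Two cosmetic points: $c_p^{\rho/p}$ must be read as $|c_p|^{\rho/p}$ throughout, and the finitely many indices where \eqref{Eq:ValironCon1} fails should be absorbed, as you do, into an $O(r^{N})$ term. In short: your route is the classical one and is sound; what it buys over the paper's citation is self-containedness, at the cost of the central-index estimates sketched above.
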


\begin{proof}[\it Proof of Theorem \ref{Thm:Cumulant}.]\ Similar to the proof
of Lemma \ref{moment}, we apply a conditioning
argument and the formula for the moment generating function of a Gaussian random
variable to derive that for all $\theta\in \R$,
\begin{equation}\label{Eq:cumu1}
\begin{split}
\E\exp\big(\theta Z^H(t)\big) &= \E
\exp\left( \frac {\theta^2} 2\, t^{2H(1- 1/\alpha)} L_1^{2H}\right).
\end{split}
\end{equation}
In order to prove (\ref{Eq:cumu}), we show that, for $2H < \alpha$, the function
$f(z) = \E\exp\big(z L_1^{2H}\big)$ is an entire function and the coefficients of
its Taylor expansion verify the conditions of Lemma \ref{Lem:Valiron}.

Let us first consider the Taylor series
\begin{equation}\label{Eq:cumu2}
M_1(r)= \sum_{n=0}^\infty \frac{\E(L_1^{2H n})} {n!}\, r^n.
\end{equation}
We will make use of the following consequence of Jensen's inequality:
For any constant $\gamma \ge 1$ and nonnegative random variable $\Delta$,
\begin{equation}\label{Eq:Jensen}
\Big( \E\big(\Delta^{\lfloor \gamma \rfloor}\big)\Big)^{\gamma/\lfloor \gamma \rfloor}
\le \E\big(\Delta^{\gamma}\big)\le \Big(\E\big(\Delta^{\lfloor \gamma \rfloor
+ 1}\big)\Big)^{\gamma/(\lfloor \gamma \rfloor + 1)}.
\end{equation}
Here and in the sequel, $\lfloor \gamma \rfloor$ denotes the largest integer
$\le \gamma$.

It follows from (\ref{Eq:Jensen}) with $\Delta = L_1$ and $\gamma = 2Hn$,
(\ref{Eq:a-zero}) in Lemma \ref{Lem:LevyLT} and Stirling's formula that
\begin{equation}\label{Eq:cumu3}
\frac{\E(L_1^{2H n})} {n!}\ \asymp \bigg(A_1\, \frac{(2H)^{1/\alpha}}
{(1 -\frac 1 \alpha)^{1- 1/\alpha}}\bigg)^{2Hn}\,
e^{n\big( 1- \frac{2 H} {\alpha}\big)}\,
n^{-n\big(1- \frac{2 H} {\alpha}\big)},
\end{equation}
where $A_1$ is the constant in (\ref{Eq:A1}). In the above, $x_n \asymp y_n$
means that, for all $n$ large enough, $x_n/y_n$ is bounded from below and
above by constant multiple of $n^{-\eta}$. Here $\eta$ is a
constant depending on $H$ and $\alpha$ only. The omitted factors have no
influence on the limit in (\ref{Eq:cumu4}) below.

By (\ref{Eq:cumu3}), we see that the Taylor series in (\ref{Eq:cumu2}) represents
an analytic function on $\R$ if and only if $2H < \alpha$. In the latter case,
we choose $\rho_1 = \frac{\alpha} {\alpha - 2H}$ and derive
\begin{equation}\label{Eq:cumu4}
\lim_{n \to \infty} \frac 1 {\rho_1 e} \, n\, \bigg(\frac{\E(L_1^{2H n})}
{n!}\bigg)^{\rho_1/n}
= \frac1 {\rho_1} \bigg(A_1\, \frac{(2H)^{1/\alpha}}
{(1 -\frac 1 \alpha)^{1- 1/\alpha}}\bigg)^{2H \rho_1}.
\end{equation}
Hence, Lemma \ref{Lem:Valiron} implies that
\begin{equation}\label{Eq:cumu5}
\lim_{r \to \infty} \frac{\log M_1(r)} {r^{\rho_1}} = \frac1 {\rho_1}
\bigg(A_1\, \frac{(2H)^{1/\alpha}}
{(1 -\frac 1 \alpha)^{1- 1/\alpha}}\bigg)^{2H \rho_1}.
\end{equation}

It follows from (\ref{Eq:cumu1}) that $\E\exp\big(\theta Z^H(t)\big)
= M_1\big(\frac {\theta^2} 2\, t^{2H(1- 1/\alpha)}\big)$. Hence (\ref{Eq:cumu})
follows from (\ref{Eq:cumu5}) and a simple change of variables.
\end{proof}

The proof of Theorem \ref{Thm:Cumulant} shows that, if $2H > \alpha$, then
$\E\big(e^{\theta Z^H(t)}\big) = \infty$ for all $\theta >0$. Hence we cannot
prove a large deviation principle for $Z^H(t)$ by applying the G\"artner-Ellis
Theorem. However, for studying the tail probability of
$Z^H(b)-Z^H(a)$, it is sufficient to consider the  exponential integrability
of $|Z^H(b)-Z^H(a)|^\beta$ for appropriately chosen $\beta> 0$.

\begin{prop}\label{Prop:Ana}
Let $0 \le a < b < \infty$ be given constants. For any $\beta > 0$ and $t \in \R$,
let $g_\beta (t) = \E\big(e^{t |W^H(L_b)-W^H(L_a)|^{\beta}}\big)$.
The following statements hold:
\begin{itemize}
\item[(i)]\ If $0 < \beta < \frac{2 \alpha} {2H + \alpha}$, then the function
$g_\beta(t)$ is analytic on $\R$.
\item[(ii)]\ If $\beta = \frac{2 \alpha} {2H + \alpha}$, then $g_\beta(t)$ is analytic
in $(-\infty, \delta_0)$ for some $\delta_0 > 0$.
\item[(iii)]\ If $\beta > \frac{2 \alpha} {2H + \alpha}$, then $ g_\beta(t) = \infty$
for all $t > 0$.
\end{itemize}
\end{prop}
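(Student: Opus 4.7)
The plan is to expand $g_\beta(t)$ as a Taylor series about $t=0$, extract the growth rate of its coefficients from the moment estimates of Lemma \ref{Lem:LevyLT} and Lemma \ref{moment}, and then read off the radius of convergence as a function of $\beta$.

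Since $|W^H(L_b)-W^H(L_a)|^\beta\ge 0$, monotone convergence yields, for every $t\ge 0$,
\begin{equation*}
g_\beta(t) \;=\; \sum_{n=0}^\infty \frac{t^n}{n!}\, m_n(\beta), \qquad m_n(\beta) := \E\big[|W^H(L_b)-W^H(L_a)|^{n\beta}\big],
\end{equation*}
as an identity in $[0,\infty]$. By the same conditioning argument used in the proof of Lemma \ref{moment} (independence of $W^H$ and $L$, stationarity of the increments of $W^H$, and its $H$-self-similarity),
\begin{equation*}
m_n(\beta) \;=\; \E\big[|W^H(1)|^{n\beta}\big]\,\E\big[(L_b-L_a)^{n\beta H}\big].
\end{equation*}
The first factor is a Gaussian absolute moment, equal to a constant times $\Gamma(n\beta/2+1/2)$. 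For the second, since $n\beta H$ is generally non-integer, I would sandwich $(L_b-L_a)^{n\beta H}$ between consecutive integer powers via the two-sided Jensen inequality \eqref{Eq:Jensen}, and then substitute the two-sided moment bounds of Lemma \ref{Lem:LevyLT}, obtaining
\begin{equation*}
\E\big[(L_b-L_a)^{n\beta H}\big] \;\asymp\; C^n\,\frac{\Gamma(n\beta H+1)}{\Gamma\bigl(1+n\beta H(1-1/\alpha)\bigr)}
\end{equation*}
for a positive constant $C=C(H,\alpha,\chi,\nu,a,b)$, modulo polynomial-in-$n$ corrections.

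Stirling's formula applied to $n!$, $\Gamma(n\beta/2+1/2)$, $\Gamma(n\beta H+1)$ and $\Gamma(1+n\beta H(1-1/\alpha))$ should give
\begin{equation*}
\frac{m_n(\beta)}{n!}\;\asymp\;D^n\,n^{n\rho(\beta)}\,e^{-n\rho(\beta)}, \qquad \rho(\beta):=\beta\cdot\frac{\alpha+2H}{2\alpha}-1,
\end{equation*}
up to polynomial factors in $n$. Hence $\limsup_n(m_n(\beta)/n!)^{1/n}$ equals $0$, a positive finite number, or $+\infty$ according as $\rho(\beta)<0$, $\rho(\beta)=0$, or $\rho(\beta)>0$, i.e.\ as $\beta<2\alpha/(\alpha+2H)$, $\beta=2\alpha/(\alpha+2H)$, or $\beta>2\alpha/(\alpha+2H)$. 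This immediately gives Parts (i) and (ii) on the positive axis (with $\delta_0>0$ in (ii) equal to the reciprocal of that limsup in the critical case); analyticity then extends to the whole interval $(-\infty,\delta_0)$ because $g_\beta(t)\le 1$ for $t\le 0$ and the moment generating function of a nonnegative random variable is real-analytic on the interior of its domain of finiteness. For Part (iii) the coefficients $m_n(\beta)/n!$ grow super-geometrically and are nonnegative, so the series diverges for every $t>0$, whence $g_\beta(t)=+\infty$ by monotone convergence.

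The main delicate step is the critical case $\beta=2\alpha/(\alpha+2H)$ in Part (ii), where $\rho(\beta)$ vanishes and the outcome is decided entirely by the polynomial Stirling corrections. Here the two-sided forms of both Jensen's inequality \eqref{Eq:Jensen} and the moment bounds of Lemma \ref{Lem:LevyLT} are essential, since a one-sided estimate would fail to match the leading exponents and would blur the borderline; for Parts (i) and (iii) cruder one-sided bounds already suffice.
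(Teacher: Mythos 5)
Your proposal is correct and takes essentially the same route as the paper's proof: expand $g_\beta$ in its Taylor series, factor the moments by conditioning on $L$, control the non-integer moments of $L_b-L_a$ via the two-sided Jensen inequality \eqref{Eq:Jensen} together with Lemma \ref{Lem:LevyLT}, and apply Stirling's formula to see that the $n$-th root of the coefficients tends to $0$, a finite positive limit, or $\infty$ according as $\beta$ is below, at, or above $2\alpha/(2H+\alpha)$. In fact you supply more detail than the paper, which compresses the final step to ``it can be verified.''
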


\begin{proof}\ Let us consider the Taylor series
\[
\sum_{n=0}^\infty \frac{\E\big(|W^H(L_b)-W^H(L_a)|^{\beta n}\big)} {n!}\, t^n.
\]
As in the proof of Lemma \ref{moment}, we have
\begin{equation}\label{Eq:ZH-mom2}
\begin{split}
\E\big(|W^H(L_b)-W^H(L_a)|^{\beta n}\big) &= \E\big(|W^H(1)|^{\beta n}\big)\,
\E\big(L_b - L_a|^{\beta H n }\big)\\
&= \frac{1} {\sqrt{\pi}}\, 2^{(\beta n)/2}\,
\Gamma \bigg( \frac {\beta n} {2} + \frac 1 2 \bigg)\,
\E\big(|L_b - L_a|^{\beta H n}\big).
\end{split}
\end{equation}

Applying (\ref{Eq:Jensen}) to $\Delta = |L_b - L_a|$ and $\gamma = \beta H n$
and using (\ref{Eq:ZH-mom2}), the moment estimates in Lemma \ref{Lem:LevyLT}
and Stirling's formula, we derive
\begin{equation}\label{Eq:Jensen2}
\frac{\E\big(|W^H(L_b)-W^H(L_a)|^{\beta n}\big)} {n!} \asymp A_2^n\,
e^{n\big( 1-\frac \beta 2 - \frac{\beta H} {\alpha}\big)}\,
n^{-n\big(1-\frac \beta 2 - \frac{\beta H} {\alpha}\big)},
\end{equation}
where $A_2$ is a constant which can be expressed explicitly in terms
of $\alpha, \beta, H$, $A_1$ and $b-a$. It can be verified that
(\ref{Eq:Jensen2}) implies the conclusions in Proposition \ref{Prop:Ana}.
\end{proof}

It follows from Proposition \ref{Prop:Ana} that, for $\beta > 0$, $g_\beta (z)$
($z \in \C$) is an entire function if and only if $\beta < \frac{2 \alpha}
{2H + \alpha}$. In this case, Theorem \ref{moment-generating-lim} further
proves that $g_\beta (z)$ is of \emph{very regular growth} in the sense of
Valiron (\cite{valiron}).

\begin{theorem}\label{moment-generating-lim}
Let $W^H$ be a fractional Brownian motion in $\rr$, and let $L_t$ be the local time
at zero of a strictly stable process $X_t$ of index $1<\alpha\leq 2$
independent of $W^H$. Then for all $0 < \beta < \frac{2 \alpha}
{2H + \alpha}$ and $0\leq a\leq b<\infty$,
\begin{equation}\label{Eq:Entirelimit}
\lim_{t\to\infty}\frac{\log \E\big[\exp\big(t|W^H(L_b)-W^H(L_a)|^\beta\big)\big]}
{t^{\rho}}=B_3,
\end{equation}
where $\rho = \frac{2 \alpha} { 2 \alpha- \alpha \beta - 2H \beta}$  and
$B_3 = B_3(H, \alpha, \nu, \chi, \beta)$ is the constant given by
\begin{equation}\label{Eq:conB}
B_3 =\frac 1 {\rho} \,A_1^{\beta H \rho} \, (b-a)^{\beta H
\rho (1 - 1/\alpha)}\Bigg(\frac{\beta^{\beta/(2H)}
(\beta H)^{\beta/\alpha}}
{\big(1 - \frac 1 \alpha\big)^{\beta  (1 - \frac 1 \alpha)}}\Bigg)^{H\rho},
\end{equation}
where $A_1$ is the constant given in (\ref{Eq:A1}).
\end{theorem}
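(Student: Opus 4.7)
The plan is to expand
\[
g_\beta(t) \;=\; \E\bigl[\exp\bigl(t|W^H(L_b)-W^H(L_a)|^\beta\bigr)\bigr]
\;=\; \sum_{n=0}^\infty c_n\, t^n,
\qquad c_n := \frac{\E\bigl[|W^H(L_b)-W^H(L_a)|^{\beta n}\bigr]}{n!},
\]
which defines an entire function when $\beta < \frac{2\alpha}{2H+\alpha}$ by part (i) of Proposition~\ref{Prop:Ana}, and then to apply Valiron's theorem (Lemma~\ref{Lem:Valiron}) to extract the growth rate of the associated maximum modulus $M(r)$. Since the $c_n$ are nonnegative, one has $M(r) = g_\beta(r)$ for $r > 0$, so the conclusion of Valiron's theorem immediately translates to the desired asymptotics for $\log g_\beta(t)$.

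The first step is to evaluate $c_n$ by conditioning on $L$: using the $H$-self-similarity and independence of $W^H$ exactly as in the derivation of \eqref{Eq:ZH-mom2}, together with the Gaussian moment identity \eqref{Eq:tauH},
\[
\E\bigl[|W^H(L_b)-W^H(L_a)|^{\beta n}\bigr]
\;=\; \frac{2^{\beta n/2}}{\sqrt{\pi}}\, \Gamma\bigl(\tfrac{\beta n}{2}+\tfrac{1}{2}\bigr)\,
\E\bigl[|L_b-L_a|^{\beta H n}\bigr].
\]
The non-integer moment $\E[|L_b-L_a|^{\beta H n}]$ is sandwiched via Jensen's inequality \eqref{Eq:Jensen} between the integer moments of order $\lfloor \beta H n\rfloor$ and $\lfloor \beta H n\rfloor +1$, and these are controlled on both sides by Lemma~\ref{Lem:LevyLT}. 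Crucially, the Gamma ratios in the upper and lower bounds of \eqref{Eq:Ltmoments} differ only by the factor $\Gamma(1+1/\alpha + n(1-1/\alpha))/\Gamma(1+n(1-1/\alpha))$, which is polynomial in $n$ and thus contributes only a subexponential factor.

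The second step is an asymptotic analysis via Stirling's formula. Setting $\kappa := \tfrac{\beta}{2}+\tfrac{\beta H}{\alpha}$ (so $0<\kappa<1$ under our hypothesis on $\beta$), straightforward bookkeeping of the $n$-dependent exponentials in $\Gamma(\tfrac{\beta n}{2}+\tfrac{1}{2})$, $\Gamma(1+\beta H n)$, $\Gamma(1+\beta H n(1-1/\alpha))$ and $n!$ yields
\[
c_n \;\asymp\; A_2^{\,n}\; e^{n(1-\kappa)}\; n^{-n(1-\kappa)},
\qquad
A_2 = A_1^{\beta H}\, (b-a)^{\beta H(1-1/\alpha)}\, \beta^{\beta/2}\, (\beta H)^{\beta H/\alpha}\,
(1-\tfrac{1}{\alpha})^{-\beta H(1-1/\alpha)},
\]
with the omitted factor polynomial in $n$ and therefore immaterial when raised to the power $\rho/n$. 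The exponent $\rho$ is now forced: requiring $\rho(1-\kappa)=1$ gives $\rho = \frac{2\alpha}{2\alpha-\alpha\beta-2H\beta}$, matching the statement.

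The third step applies Lemma~\ref{Lem:Valiron} with the subsequence $p_n = n$ (trivially satisfying \eqref{Eq:ValironCon2}). From the asymptotics above and $\rho(1-\kappa)=1$,
\[
\frac{1}{\rho e}\, n\, c_n^{\rho/n}
\;\longrightarrow\; \frac{1}{\rho e}\, A_2^{\rho}\, e^{\rho(1-\kappa)}
\;=\; \frac{A_2^{\rho}}{\rho},
\]
so both \eqref{Eq:ValironCon1} and \eqref{Eq:ValironCon3} hold with $B = A_2^\rho/\rho$. Lemma~\ref{Lem:Valiron} then gives $\log M(r)/r^\rho \to A_2^\rho/\rho$, and since $M(r) = g_\beta(r)$ this is \eqref{Eq:Entirelimit}. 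A direct algebraic check shows $A_2^\rho/\rho$ coincides with the constant $B_3$ in \eqref{Eq:conB}, completing the proof.

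The main obstacle is the precise constant-tracking in the Stirling step: four separate Gamma functions contribute pieces of the form $(\text{const})^n \, n^{an}\, e^{bn}$, and these must combine to exactly $n^{-n(1-\kappa)} e^{n(1-\kappa)}$ with a clean constant $A_2$ so that $A_2^\rho/\rho$ matches $B_3$. Ensuring that the upper and lower bounds of Lemma~\ref{Lem:LevyLT} produce the same leading exponential behavior (their discrepancy is only polynomial in $n$, hence disappears after taking $n$th roots and then the limit) is the key technical point that makes both halves of Valiron's criterion yield the same value $B_3$.
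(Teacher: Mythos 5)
Your proposal is correct, and it runs on the same engine as the paper's proof: the conditioning identity reducing everything to moments of $|W^H(1)|$ and of $|L_b-L_a|$, the Jensen sandwich \eqref{Eq:Jensen} combined with Lemma \ref{Lem:LevyLT} and Stirling's formula to get the coefficient asymptotics, and Valiron's theorem (Lemma \ref{Lem:Valiron}) to convert those into the growth rate. Where you differ is in how Valiron is invoked. The paper first traps $e^x$ between two series supported on the subsequence $p=\lfloor n/H\rfloor$ via the elementary inequality \eqref{Eq:AuxInq}, applies Lemma \ref{Lem:Valiron} to each bounding series $f_1,f_2$ (with $p_n=\lfloor n/H\rfloor$, which satisfies \eqref{Eq:ValironCon2}), and squeezes. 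You instead expand $g_\beta$ itself, observe that nonnegativity of the Taylor coefficients gives $M(r)=g_\beta(r)$ for $r>0$, and take $p_n=n$. Your shortcut is legitimate: the termwise expansion of $g_\beta(t)$ for $t\ge 0$ is justified by monotone convergence, and since the paper's subsequence device does not actually avoid the non-integer moments of $L_b-L_a$ (Jensen is still needed in \eqref{Eq:Jensen3}), it buys nothing here; your route is genuinely shorter. The constant bookkeeping also checks: your $A_2$ equals the paper's $A_3\,H^{-(1-\kappa)}$, so $A_2^{\rho}/\rho = A_3^{\rho}/(\rho H)$, in agreement with \eqref{Eq:C-lim2}, and a direct computation confirms $A_2^{\rho}/\rho = B_3$ as given in \eqref{Eq:conB}, while $\rho(1-\kappa)=1$ recovers $\rho=\frac{2\alpha}{2\alpha-\alpha\beta-2H\beta}$.
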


Note that, when $H=1/2$, we can take $\beta = 1$. In this case, $B_3$ is reduced to
$$
B_3 =\left[\frac{\Gamma (1- \frac 1 \alpha)
\Gamma (\frac 1 \alpha)\chi ^{1/\alpha}\cos \big(\frac 1 \alpha
 \arctan (\nu \tan (\frac{\pi \alpha}{2})) \big)}{2\pi \alpha
 [1+(\nu\tan (\frac{\pi \alpha}{2}))^2]^{1/(2\alpha)}}
 \right]^{\frac{\alpha}{\alpha-1}}.
 $$

\begin{proof}[\it Proof of Theorem \ref{moment-generating-lim}]\
We start with the following elementary fact: Let
$H \in (0, 1)$ be a constant. Then for all $x \ge 0$,
\begin{equation}\label{Eq:AuxInq}
1+ \sum_{n=1}^{\infty} \frac{x^{\lfloor n/H \rfloor}}
{\lfloor \frac n H \rfloor !} \leq e^{x}
\leq \frac e H \, \left( 1+ \sum_{n=1}^{\infty}
\frac{x^{\lfloor n/H \rfloor}}
{\big(\lfloor \frac {n-1} H \rfloor +1 \big)!} \right).
\end{equation}
In order to verify (\ref{Eq:AuxInq}), first note that the first inequality holds
for all $x \ge 0$ because $\{\lfloor \frac n H \rfloor, n \ge 0\}$ is a subsequence
of $\N$, and the second inequality holds for all $0 \le x \le 1$.
Hence it only remains to show the second inequality holds for all $x > 1$.
This can be verified by grouping the terms in the expansion
$e^x = 1 +\sum_{k=1}^\infty \frac{x^k} {k!}$ in the blocks
$k \in \{\lfloor \frac {n-1} H \rfloor +1, \cdots, \lfloor \frac{n} H \rfloor \}$,
and noting that the number of integers in each block is at most $1 + \frac 1 H$,
which is smaller than $\frac e H$.

Now let $\beta \in (0,\frac{2 \alpha} {2H + \alpha})$ be a constant and let
$t \ge 0$. By taking $x = t |W^H(L_b)-W^H(L_a)|^\beta$ in (\ref{Eq:AuxInq}),
we have
\begin{equation}\label{Eq:twoF}
\begin{split}
1+ &\sum_{n=1}^{\infty} \frac{t^{\lfloor n/H \rfloor} \E\big(
|W^H(L_b)-W^H(L_a)|^{\beta \lfloor n/H \rfloor}\big)}
{\lfloor \frac n H \rfloor !}
\leq g_\beta(t)\\
&\qquad \qquad \qquad \leq \frac e H \, \left( 1+
\sum_{n=1}^{\infty} \frac{t^{\lfloor n/H \rfloor}
\E \big(|W^H(L_b)-W^H(L_a)|^{\beta \lfloor n/H \rfloor}\big)}
{\big(\lfloor \frac {n-1} H \rfloor +1 \big)!} \right).
\end{split}
\end{equation}
Denote the first and last terms in (\ref{Eq:twoF}) by $f_1(t)$ and $f_2(t)$,
respectively. In order to prove (\ref{Eq:Entirelimit}), it suffices to show
that for $i=1, 2$,
\begin{equation}\label{Eq:twoF2}
\lim_{t \to \infty} \frac{\log f_i(t)} {t^{\rho}} = B_3,
\end{equation}
where $\rho = \frac{2 \alpha} { 2 \alpha- \alpha \beta - 2H \beta}$ and
$B_3 = B_3(H, \alpha, \nu, \chi, \beta)$ is given by (\ref{Eq:conB}).

This can be done by showing the coefficients of $f_i$ satisfy the conditions of
Lemma \ref{Lem:Valiron}. Moreover, since the proofs for $i = 1, 2$ are
almost the same, we only prove (\ref{Eq:twoF2}) for $i= 1$.

Note that the coefficients $c_p$ ($p = 0, 1, \cdots$) of $f_1(t)$ are given by
$$
c_p = \frac{\E\big(
|W^H(L_b)-W^H(L_a)|^{\beta \lfloor n/H \rfloor}\big)} {\lfloor \frac n H \rfloor !}
$$
if $p = \lfloor \frac n H\rfloor$ and $c_p = 0$ otherwise. By Lemma \ref{Lem:Valiron},
it suffices to show
\begin{equation}\label{Eq:C-lim}
\lim_{n \to \infty} \frac 1 {\rho e}\, \lfloor \frac n H \rfloor\,
\left(\frac{ \E\big(|W^H(L_b)-W^H(L_a)|^{\beta \lfloor n/H \rfloor}\big)}
{\lfloor \frac n H \rfloor !}\right)^{\rho/\lfloor \frac n H\rfloor}
= B_3.
\end{equation}

As in the proof of Lemma \ref{moment}, we have
\begin{equation}\label{Eq:ZH-mom3}
\begin{split}
&\E\big(|W^H(L_b)-W^H(L_a)|^{\beta \lfloor n/H \rfloor} \big) \\
&= \frac{1} {\sqrt{\pi}}\, 2^{\beta \lfloor n/H \rfloor/2}\,
\Gamma \bigg( \frac {\beta \lfloor n/H \rfloor} {2} + \frac 1 2 \bigg)\,
\E\big(|L_b - L_a|^{\beta H \lfloor n/H \rfloor}\big).
\end{split}
\end{equation}
By (\ref{Eq:Jensen}), the moment estimates in Lemma \ref{Lem:LevyLT} and
Stirling's formula, we can verify that
\begin{equation}\label{Eq:Jensen3}
\frac{\E\big(|W^H(L_b)-W^H(L_a)|^{\beta \lfloor n/H \rfloor}\big)}
{\lfloor \frac n H \rfloor !} \asymp A_3^{ \lfloor n/H \rfloor}\,
e^{n\big( \frac 1 H -\frac \beta {2H} - \frac{\beta } {\alpha}\big)}\,
n^{-n\big(\frac 1 H -\frac \beta {2H} - \frac{\beta} {\alpha}\big)},
\end{equation}
where $A_3$ is a constant defined by
\begin{equation}\label{Eq:A3}
A_3 =  A_1^{\beta H}\, (b-a)^{\beta H(1 - \frac 1 \alpha)}
\left(\frac{\beta^{\frac \beta {2H} + \frac \beta \alpha} \,
H^{\frac 1 H - \frac \beta {2H}}}
{\big(1 - \frac 1 \alpha\big)^{\beta  (1 - \frac 1 \alpha)}}\right)^H.
\end{equation}
Since $\big(\frac 1 H - \frac \beta {2H} - \frac{\beta } {\alpha}\big)H \rho = 1$,
we see that (\ref{Eq:Jensen3}) implies
\begin{equation}\label{Eq:C-lim2}
\lim_{n \to \infty} \frac 1 {\rho e}\, \lfloor \frac n H \rfloor\,
\left(\frac{ \E\big(|W^H(L_b)-W^H(L_a)|^{\beta \lfloor n/H \rfloor}\big)}
{\lfloor \frac n H \rfloor !}\right)^{\rho/\lfloor \frac n H\rfloor}
= \frac{A_3^{\rho}} {\rho H} = B_3,
\end{equation}
where $B_3$ is given by (\ref{Eq:conB}).
This proves (\ref{Eq:C-lim}) and hence Theorem \ref{moment-generating-lim}.
\end{proof}

\section{Large deviations results: proofs of Theorems \ref{Thm:Zt-LDP}
and \ref{probability-thm}}

In this section, we first prove Theorems \ref{Thm:Zt-LDP} and
\ref{probability-thm}. Then we apply a maximal inequality due
to Moricz, et al. \cite{MSS82} to
derive upper bounds for the tail probabilities of the maxima
$\max_{t \in [0, 1]}Z^H(t)$ and $\max_{t \in [a, b]}|Z^H(t) - Z^H(a)|$.

\begin{proof}[\it Proof of Theorem \ref{Thm:Zt-LDP}]\ Note that the function
$\Lambda_1 (\theta)= B_1\, \theta^{\frac{2 \alpha} {\alpha - 2H}}$ in
Theorem \ref{Thm:Cumulant} is essentially smooth and continuous
on $\R$. It follows from the G\"artner-Ellis Theorem
(cf. \cite[Theorem 2.3.6]{DemboZ98}) that the pair $\big(t^{- \frac{2H(\alpha - 1)}
{\alpha - 2H}}\,Z^H(t),
t^{\frac{2H(\alpha - 1)} {\alpha - 2H}}\big)$ satisfies a large deviation principle
with the good rate function
$$
\Lambda^*_1(x) = \sup_{\theta \in \R}
\big( \theta x - \Lambda_1(\theta)\big),$$
which is the Fenchel-Legendre transform of $\Lambda_1$.
It is elementary to verify that $\Lambda^*_1(x)$
coincides with (\ref{Eq:Lambdastar}). This proves Theorem \ref{Thm:Zt-LDP}.
\end{proof}

\begin{proof}[\it Proof of Theorem \ref{probability-thm}]\
Let $0 < \beta < \frac{2 \alpha} {2H + \alpha}$ be fixed. It follows from
(\ref{Eq:Entirelimit}) in Theorem \ref{moment-generating-lim} and
Davies' Theorem 1 in \cite{davies} that
\begin{equation}\label{Eq:Davies1}
\lim_{u \to \infty} \frac{\log
\P\big\{|W^H(L_b)-W^H(L_a)|^\beta \ge u\big\}} {u^{\rho/(\rho - 1)}}
= - \big(1 - \rho^{-1}\big) \big(\rho B_3\big)^{-1/(\rho-1)}.
\end{equation}
Here $\rho = \frac{2 \alpha} { 2 \alpha- \alpha \beta - 2H \beta}$.
Letting $x = u^{1/\beta}$ and simplifying the right-hand side of (\ref{Eq:Davies1}),
we obtain
\begin{equation}\label{Eq:Davies2}
\begin{split}
&\lim_{x \to \infty} \frac{ \log \P\big\{|W^H(L_b)-W^H(L_a)| \ge x\big\} }
{x^{\frac{2 \alpha} {\alpha + 2H}}} \\
& = - \frac{\alpha + 2H} {2 \alpha}\, \left(\frac{H\, {A}_1^\alpha}
{\big(1 - \frac 1 \alpha\big)^{\alpha -1 }}\,
 \right)^{- \frac{2 H} {\alpha + 2H}}\, \big(b-a\big)^{- \frac{2H(\alpha - 1)}
{\alpha + 2H}}.
\end{split}
\end{equation}
This finishes the proof of Theorem \ref{probability-thm}.
\end{proof}

\begin{remark}\label{Re:KM}
Theorem \ref{probability-thm} can also be proved by using Lemma 2.3 in K\"{o}nig
and M\"{o}rters \cite{kon-mor} (note that their assumption $p \in \N$ can be
replaced by $p>0$) and the moment results in Lemma \ref{moment}.
The proof of Lemma 2.3 in \cite{kon-mor} is based on a change-of-measure technique
in large deviations. We remark that Lemma 2.3 in \cite{kon-mor} is equivalent
to Corollary 2 in Davies \cite{davies}, hence it can also be proved by using
an analytic method.
\end{remark}

Similar to the proof of Theorems \ref{Thm:Cumulant} and \ref{Thm:Zt-LDP},
we obtain the following theorem for the local time $L_t$ at zero of $X$.
They are in complement to the results of Hawkes \cite{hawkes} and Lacey \cite{lacey}
on the tail asymptotics for the local time $L_t$ and the maximum local time
$\max_{x \in \R} L(x, t)$, respectively.

\begin{theorem}\label{moment-generating-lim-local}
Let $L = \{L_t, t \ge 0\}$ be the local time at zero of a real-valued strictly
stable L\'evy process $X= \{X_t, t \ge 0\}$ of index $1<\alpha\leq 2$. Then the
following two statements hold:

{\rm (i)}\ For all $0\leq a\leq b<\infty$,
\begin{equation}\label{Eq:LTgen}
\lim_{t\to\infty}\frac{\log
\E[\exp( t\,(L_b-L_a))]}{t^{\frac{\alpha}{\alpha-1}}} = (b-a)\,C(\alpha,
\nu,\chi),
\end{equation}
where $C(\alpha, \nu,\chi)$ is the constant defined by
\[
C(\alpha, \nu,\chi)=\left[\frac{\Gamma \big(1- \frac 1 \alpha\big)\Gamma
\big(\frac 1 \alpha\big)\chi ^{1/\alpha}\cos \big(\frac 1 \alpha
 \arctan (\nu \tan (\frac{\pi \alpha}{2}))\big)}{\pi \alpha
 \big[1+(\nu\tan (\frac{\pi \alpha}{2}))^2\big]^{1/2\alpha}}
 \right]^{\frac{\alpha}{\alpha-1}}.
\]

{\rm (ii)}\ The pair $\big(t^{-1/(\alpha-1)} (L_b- L_a),\, t^{\alpha/(\alpha-1)}\big)$
satisfies LDP with good rate function $\Lambda^*_2(x) = \frac{x^{\alpha}}
{\alpha}\, \left[\left(\frac{\alpha}{\alpha  -1}
\right)(b-a)\, C(\alpha, \nu,\chi)\right]^{-(\alpha-1)}$ if $x > 0$ and
$\Lambda^*_2(x) = \infty$ if $x \le 0$. That is,
for every Borel set $F\subseteq \rr$,
\begin{equation}\label{LT:LDPup}
\limsup_{t \to \infty} t^{-\frac{\alpha}{\alpha-1}}\, \log \P\Big\{
t^{- \frac{1} {\alpha - 1}} \, (L_b-L_a) \in F\Big\}
\le - \inf_{x \in \overline{F}}\Lambda^*_2(x)
\end{equation}
and
\begin{equation}\label{LT:LDPlow}
\liminf_{t \to \infty} t^{- \frac{\alpha}{\alpha-1}}  \, \log \P\Big\{
t^{- \frac{1} {\alpha - 1}} \, (L_b-L_a) \in F\Big\}
\ge - \inf_{x \in {F}^{\circ}}\Lambda^*_2(x).
\end{equation}
\end{theorem}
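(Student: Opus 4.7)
The plan is to mirror the proofs of Theorems \ref{Thm:Cumulant} and \ref{Thm:Zt-LDP}, using the moment estimates from Lemma \ref{Lem:LevyLT} directly (now for $L_b-L_a$ rather than $W^H(L_b)-W^H(L_a)$), then invoking Valiron's Lemma \ref{Lem:Valiron} for part (i) and the G\"artner--Ellis Theorem for part (ii).

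For part (i), I would consider the Taylor series
\[
 M(r) \;=\; \sum_{n=0}^{\infty}\frac{\E\big[(L_b-L_a)^n\big]}{n!}\, r^n.
\]
By Lemma \ref{Lem:LevyLT}, the coefficients satisfy $c_n=\E[(L_b-L_a)^n]/n!\asymp A_1^n\,(b-a)^{n(1-1/\alpha)}/\Gamma(1+n(1-\tfrac{1}{\alpha}))$, so Stirling's formula gives
\[
 c_n\ \asymp\ A_1^n\,(b-a)^{n(1-1/\alpha)}\,\frac{e^{n(1-1/\alpha)}}{(n(1-\tfrac 1\alpha))^{n(1-1/\alpha)}}.
\]
This is entire and the growth is of order $\rho=\alpha/(\alpha-1)$ precisely because $\rho(1-\tfrac 1 \alpha)=1$. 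A short calculation then yields
\[
 \lim_{n\to\infty}\frac{1}{\rho e}\,n\, c_n^{\rho/n}\;=\;A_1^{\rho}\,(b-a),
\]
and Lemma \ref{Lem:Valiron} gives $\log M(r)/r^\rho\to A_1^\rho(b-a)$. Since $\E[\exp(t(L_b-L_a))]=M(t)$ and the stated constant satisfies $C(\alpha,\nu,\chi)=A_1^{\alpha/(\alpha-1)}$ (by inspection of \eqref{Eq:A1}), this establishes \eqref{Eq:LTgen}.

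For part (ii), I would apply the G\"artner--Ellis Theorem to the family $Y_t:=t^{-1/(\alpha-1)}(L_b-L_a)$ with speed $t^{\alpha/(\alpha-1)}$. Because $\alpha/(\alpha-1)-1/(\alpha-1)=1$, the scaled logarithmic moment generating function reduces to
\[
 \Lambda_t(\theta)\;=\;t^{-\alpha/(\alpha-1)}\log \E\big[\exp(\theta\, t\,(L_b-L_a))\big].
\]
For $\theta>0$, setting $s=\theta t$ and invoking (i) gives $\Lambda_t(\theta)\to (b-a)C(\alpha,\nu,\chi)\,\theta^{\alpha/(\alpha-1)}=:\Lambda(\theta)$. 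For $\theta\le 0$, non-negativity of $L_b-L_a$ makes $\Lambda_t(\theta)\le 0$, while the trivial bound $\E[e^{\theta t(L_b-L_a)}]\ge e^{\theta t c}\,\P(L_b-L_a\le c)$ for any fixed $c>0$ with $\P(L_b-L_a\le c)>0$ shows $\Lambda_t(\theta)\ge \theta c\, t^{-1/(\alpha-1)}+o(1)\to 0$. Hence $\Lambda(\theta)=0$ for $\theta\le 0$. The resulting $\Lambda$ is continuous, convex and differentiable on all of $\R$ (the right derivative at $0$ vanishes because $1/(\alpha-1)>0$), so it is essentially smooth on $\R$ with no effective boundary. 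G\"artner--Ellis then delivers the LDP with rate $\Lambda^*_2$, the Fenchel--Legendre transform of $\Lambda$.

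A direct optimization $\Lambda^*_2(x)=\sup_\theta[\theta x-\Lambda(\theta)]$ finishes the identification: for $x<0$ one takes $\theta\to -\infty$ to obtain $+\infty$; for $x>0$ the supremum is attained at $\theta^*=(x/K)^{\alpha-1}$ with $K=(\alpha/(\alpha-1))(b-a)C(\alpha,\nu,\chi)$, giving $\Lambda^*_2(x)=x^\alpha/(\alpha K^{\alpha-1})$, which is the formula in the statement. The main technical point requiring care will be the $\theta\le 0$ branch: although the formal support issue makes $\Lambda^*_2(0)=0$ rather than $+\infty$, this harmless discrepancy does not affect the infima appearing in \eqref{LT:LDPup} and \eqref{LT:LDPlow}, so the theorem follows as stated.
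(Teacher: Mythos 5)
Your proposal is correct and follows essentially the same route as the paper: part (i) via Valiron's lemma applied to the Taylor series whose coefficients are controlled by the moment estimates of Lemma~\ref{Lem:LevyLT}, and part (ii) via the G\"artner--Ellis theorem applied to the limit function $\Lambda_2$. You in fact supply more detail than the paper (the $\theta\le 0$ branch and the explicit Legendre optimization), and your observation that the genuine Fenchel--Legendre transform gives $\Lambda^*_2(0)=0$ rather than $+\infty$ is a correct point that the paper's statement glosses over.
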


\begin{proof}\ Similar to the proof of Theorem \ref{Thm:Cumulant},
Eq. (\ref{Eq:LTgen}) follows from Lemma \ref{Lem:Valiron} and the moment estimates
in Lemma \ref{Lem:LevyLT}.

It follows from (i) that for all $\theta >0$,
\begin{equation}\label{Eq:LTgen2}
\lim_{t\to\infty}\frac{\log
\E[\exp(\theta\, t\,(L_b-L_a))]}{t^{\frac{\alpha}{\alpha-1}}} = (b-a)\,C(\alpha,
\nu,\chi)\, \theta^{\alpha/(\alpha -1)}.
\end{equation}
Denote
\begin{equation}\label{Eq:Lambda2}
\Lambda_2(\theta) =\left\{\begin{array}{ll}
(b-a)\,C(\alpha,\nu,\chi)\, \theta^{\alpha/(\alpha -1)}
 \qquad &\hbox{ if }\ \theta > 0,\\
0 &\hbox{ if }\ \theta \le 0.\end{array}\right.
\end{equation}
Then, in the terminology of \cite{DemboZ98}, $\Lambda_2$ is an essentially
smooth, continuous function on $\R$ and its
Fenchel-Legendre transform is given by
\begin{equation}\label{Eq:Lambda2*}
\Lambda^*_2(x)  =\left\{\begin{array}{ll}
\frac{x^{\alpha}} {\alpha}\, \left[\left(\frac{\alpha}{\alpha  -1}
\right)(b-a)\, C(\alpha, \nu,\chi)\right]^{-(\alpha-1)}
 \qquad &\hbox{ if }\ x > 0,\\
\infty &\hbox{ if }\ x \le 0.\end{array}\right.
\end{equation}
Therefore, as in the proof of Theorem \ref{Thm:Zt-LDP},
Part (ii) follows from (\ref{Eq:LTgen2}) and the G\"artner-Ellis Theorem.
\end{proof}

\begin{remark}
Let $F=[1, \infty)$ in (\ref{LT:LDPup}) and (\ref{LT:LDPlow}), we obtain the tail
probability
\begin{eqnarray}\label{Eq:LT:tail}
\lim_{x \to\infty} \frac{\log \P \left\{ L_b-L_a>x
\right\}}{x^{\alpha}}&=& - \frac{1}{\alpha} \left[\left(\frac{\alpha}{\alpha  -1}
\right)(b-a)\, C(\alpha, \nu,\chi)\right]^{-(\alpha-1)}.
\end{eqnarray}
In case $a=0$, the limit in (\ref{Eq:LT:tail}) is weaker than the best
known. By using the connection between $L_t$ and a stable subordinator of
index $1 - 1/\alpha$,  one can derive more precise result (i.e., without the logarithm)
on limiting behavior of $\P \big\{ L_b >x \big\}$ as $x \to \infty$. See Hawkes
\cite{hawkes}.
\end{remark}

Theorem \ref{probability-thm} is concerned with asymptotic behavior of the tail
probability $\P \big\{ |Z^H(b)$ $ - Z^H(a)| >x \big\}$ as $x \to \infty$.
In many applications, however, it is useful to have sharp bounds on
$\P \big\{ |Z^H(b) - Z^H(a)| >x \big\}$ and $\P \big\{ \max_{a \le t \le b}
|Z^H(t) - Z^H(a)| >x \big\}$ that hold for \emph{all} $x > 0$. In the rest of this
section, we consider these questions and in the next section we use them to derive
upper bounds for the local and uniform moduli of continuity for $Z^H$.

\begin{lemma}\label{for-all-u}
There exists a finite constant $A_4>0$, depending on $H, \alpha, \nu$ and $\chi$ only,
such that for all  $0\le a<b < \infty$ and all $x>0$,
\begin{equation}\label{Eq:tailUPB}
\P \big\{|Z^H(b)-Z^H(a)|>  x\big\} \leq  \exp\left(- A_4\,
\frac{x^{2\alpha/(\alpha+2H)}} {(b-a)^{2H(\alpha-1)/(\alpha + 2H)}}\right).
\end{equation}
\end{lemma}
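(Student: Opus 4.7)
The plan is to apply the Chernoff--Markov inequality to the exponential moment controlled in Theorem~\ref{moment-generating-lim}. Fix any $\beta \in \bigl(0, \tfrac{2\alpha}{\alpha+2H}\bigr)$---the eventual tail exponent will turn out to be independent of this choice---and set $g_\beta(t) := \E\bigl[\exp\bigl(t\,|Z^H(b) - Z^H(a)|^\beta\bigr)\bigr]$. For every $t > 0$,
\begin{equation*}
\P\bigl\{|Z^H(b) - Z^H(a)| > x\bigr\} \;\le\; e^{-t x^\beta}\, g_\beta(t),
\end{equation*}
so the task reduces to controlling $g_\beta(t)$ uniformly in $t \ge 0$ with the correct $(b-a)$-dependence, and then optimizing over $t$.

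The key step is to establish
\begin{equation*}
g_\beta(t) \;\le\; K_0\, \exp\!\Bigl( K_1\, (b-a)^{\beta H \rho (1 - 1/\alpha)}\, t^{\rho}\Bigr)
\quad\text{for all } t \ge 0,
\end{equation*}
with $\rho = \tfrac{2\alpha}{2\alpha - (\alpha + 2H)\beta}$ and constants $K_0, K_1$ depending only on $\alpha, H, \beta, \nu, \chi$. This is the non-asymptotic sibling of Theorem~\ref{moment-generating-lim}; I would derive it by expanding $g_\beta$ as a Taylor series and inserting the coefficient estimate (\ref{Eq:Jensen2}) from the proof of Proposition~\ref{Prop:Ana}, which already isolates the $(b-a)$-dependence through the constant $A_2 \propto A_1^{\beta H}(b-a)^{\beta H (1 - 1/\alpha)}$. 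After Stirling's formula, the Taylor series is dominated by $\sum_n (A_2 t)^n (e/n)^{\eta n}$ with $\eta = 1 - \beta/2 - \beta H/\alpha = 1/\rho$; splitting the sum at $n^* \asymp (A_2 t)^{\rho}$ (the location of the maximal term) and bounding the tail by a geometric series yields the claimed exponential bound.

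With the uniform bound in hand, I minimize $-t x^\beta + K_1 (b-a)^s t^\rho$ (where $s = \beta H \rho(1-1/\alpha)$) in $t > 0$. The minimizer is $t^* \propto x^{\beta/(\rho-1)} (b-a)^{-s/(\rho-1)}$ and the resulting exponent is $-c\, x^{\beta\rho/(\rho-1)}\, (b-a)^{-s/(\rho-1)}$. Using $\rho - 1 = \tfrac{(\alpha + 2H)\beta}{2\alpha - (\alpha + 2H)\beta}$ one checks directly that
\begin{equation*}
\frac{\beta\rho}{\rho - 1} \;=\; \frac{2\alpha}{\alpha + 2H}, \qquad \frac{s}{\rho - 1} \;=\; \frac{2H(\alpha - 1)}{\alpha + 2H},
\end{equation*}
which matches (\ref{Eq:tailUPB}) exactly and, as promised, does not depend on the choice of $\beta$. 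The residual $\log K_0$ is absorbed by shrinking $A_4$; for small $x$, the inequality is achieved trivially using $\P(\cdot) \le 1$ after possibly shrinking $A_4$ further.

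The main obstacle is the uniform-in-$t$ bound on $g_\beta(t)$, since Theorem~\ref{moment-generating-lim} supplies only the asymptotic $\log g_\beta(t) \sim B_3 t^\rho$, which alone is not enough to run a Chernoff estimate valid down to all $t > 0$. The coefficient bound (\ref{Eq:Jensen2}) is, however, uniform in $n$ and already carries the correct $(b-a)$-scaling, so upgrading it to a clean exponential bound for $g_\beta$ is routine but combinatorially delicate; everything else---the Chernoff optimization and the identification of the exponents---is purely mechanical.
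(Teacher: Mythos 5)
Your route is correct in substance but genuinely different from the paper's. The paper never passes through exponential moments: it normalizes $\Lambda = |Z^H(b)-Z^H(a)|/(b-a)^{H(\alpha-1)/\alpha}$, derives $\E(\Lambda^n)\le A_5^n\, n^{n(\alpha+2H)/(2\alpha)}$ directly from the conditioning identity and Lemma \ref{Lem:LevyLT}, applies Markov's inequality with the $n$-th moment, and then chooses the moment order $n=\lfloor u^{2\alpha/(\alpha+2H)}\rfloor$ so that the bound becomes $e^{-n}$. That is the ``optimize over $n$'' counterpart of your ``optimize over the Chernoff parameter $t$'' argument; the two are classically of equal strength when moments grow like $n^{\gamma n}$, and your exponent bookkeeping ($\beta\rho/(\rho-1)=2\alpha/(\alpha+2H)$, $s/(\rho-1)=2H(\alpha-1)/(\alpha+2H)$, independence of the final exponent from $\beta$) checks out. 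What the paper's route buys is precisely the avoidance of the step you identify as the main obstacle: no non-asymptotic upgrade of Theorem \ref{moment-generating-lim} is needed, because the $n$-th moment bounds are already uniform in $n$ and carry the correct $(b-a)$-scaling through the factor $(b-a)^{n(1-1/\alpha)}$. Your route requires actually writing out the uniform bound on $g_\beta(t)$ (splitting the Taylor series at the maximal term, as you sketch), which is doable but adds a lemma the paper does not need. One caveat, which applies to the paper's proof as well: the regime $x\lesssim (b-a)^{H(\alpha-1)/\alpha}$ (equivalently $u<1$ in the paper, or $v=x^{2\alpha/(\alpha+2H)}(b-a)^{-2H(\alpha-1)/(\alpha+2H)}$ bounded, in yours) is not genuinely covered: your appeal to $\P(\cdot)\le 1$ does not yield $\le e^{-A_4 v}$ for $v$ in a bounded range away from $0$, and shrinking $A_4$ does not repair this, since the right-hand side stays strictly below $1$; the paper's choice of $n$ is likewise vacuous there ($n=0$). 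Closing this regime requires an anti-concentration estimate for $|Z^H(b)-Z^H(a)|$ near $0$, not just Markov or Chernoff; it is harmless for the applications in Section 5, which use the bound only for large $u$, but you should not present it as trivial.
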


\begin{proof}\ We consider the random variable
$$
\Lambda =\frac{|Z^H(b)-Z^H(a)|}{(b-a)^{H(\alpha-1)/\alpha}}.
$$
As in the proof of Lemma \ref{moment}, we apply a conditioning argument and
Lemma \ref{Lem:LevyLT} to show that for all integers $n \ge 1$,
\begin{equation}\label{Eq:Lamom}
\E(\Lambda^n) \le  A_5^n\, n^{n(\alpha + 2H)/(2\alpha)},
\end{equation}
where $A_5 > 0$ is a constant depending on $H, \alpha, \nu$ and $\chi$ only.

For any constant $A_6> 0$, the Markov inequality and (\ref{Eq:Lamom}) imply that
for all $u> 0$,
\begin{equation} \label{Eq:Lamom2}
\begin{split}
\P\big\{\Lambda > A_6 u \big\} &\leq
\frac{A_5^n\, n^{n(\alpha + 2H)/(2\alpha)}} {A_6^n\,u^n} \\
&= \bigg(\frac{A_5} {A_6}\bigg)^n\,
\bigg(\frac{n^{(\alpha + 2H)/(2\alpha)}} u\bigg)^n.
\end{split}
\end{equation}
By taking the constant $A_6 \ge e A_5$ and $n = \lfloor u^{2\alpha/(\alpha + 2H)}\rfloor$,
we obtain
\begin{equation} \label{Eq:Lamom3}
\begin{split}
\P\big\{|Z^H(b)-Z^H(a)| > A_6\,(b-a)^{H(\alpha-1)/\alpha}\, u \big\} \leq
\exp \left(- u^{2\alpha/(\alpha+2H)} \right).
\end{split}
\end{equation}
It is clear that (\ref{Eq:tailUPB}) follows from (\ref{Eq:Lamom3}) by
letting $x = A_6\,(b-a)^{H(\alpha-1)/\alpha} u$.
\end{proof}

Next we apply Lemma \ref{for-all-u} and a result of Moricz, et al. \cite{MSS82}
to prove the following theorem.
\begin{theorem}\label{Thm:maxtail}
There exist positive and finite constants $A_7$ and $A_8$, depending on $H,
\alpha, \nu$  and $\chi$ only, such that for all  $0\le a< b < \infty$ and
all $x>0$,
\begin{equation}\label{Eq:maxtailUPB}
\P \bigg\{\max_{a \le t \le b}|Z^H(t)-Z^H(a)|> x\bigg\}
\leq A_7\, \exp\left(- A_8\, \frac{x^{2\alpha/(\alpha+2H)}}
{(b-a)^{2H(\alpha-1)/(\alpha + 2H)}}\right).
\end{equation}
\end{theorem}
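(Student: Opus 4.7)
The strategy is to combine the moment bound extracted from the proof of Lemma \ref{for-all-u} with the Moricz--Serfling--Stout maximal inequality \cite{MSS82}, and then to repeat verbatim the Markov-inequality step of Lemma \ref{for-all-u} for the maximum rather than for a single increment. The conditioning argument producing (\ref{Eq:Lamom}) is insensitive to the particular endpoints, so for every integer $n\ge 1$ and every $0\le s<t<\infty$,
\begin{equation*}
\E\bigl[|Z^H(t)-Z^H(s)|^{n}\bigr]\le A_5^{n}\, n^{n(\alpha+2H)/(2\alpha)}\,(t-s)^{nH(\alpha-1)/\alpha},
\end{equation*}
with the same $A_5 = A_5(H,\alpha,\nu,\chi)$ appearing in the proof of Lemma \ref{for-all-u}.

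For integers $n$ large enough that $\delta_n := nH(\alpha-1)/\alpha>1$, the right-hand side has the form $(\lambda_n(t-s))^{\delta_n}$ with $\lambda_n = A_5^{\alpha/(H(\alpha-1))} n^{(\alpha+2H)/(2H(\alpha-1))}$, and $(s,t)\mapsto \lambda_n(t-s)$ is super-additive. Applying the MSS82 maximal inequality to an arbitrary finite partition $a=t_0<t_1<\cdots<t_N=b$ yields
\begin{equation*}
\E\Bigl[\max_{0\le k\le N}|Z^H(t_k)-Z^H(a)|^{n}\Bigr]\le K_n\,A_5^{n}\,n^{n(\alpha+2H)/(2\alpha)}\,(b-a)^{nH(\alpha-1)/\alpha},
\end{equation*}
where the MSS82 constant has the form $K_n = (1-2^{(1-\delta_n)/n})^{-n}$. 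Because $\delta_n/n = H(\alpha-1)/\alpha$ is a fixed constant in $(0,1)$, $K_n^{1/n}$ converges to $(1-2^{-H(\alpha-1)/\alpha})^{-1}$ and is therefore bounded uniformly in $n$. Taking the supremum over a sequence of refining partitions and invoking the a.s.\ continuity of $Z^H$ via monotone convergence transfers the bound to the continuous supremum $\E\max_{a\le t\le b}|Z^H(t)-Z^H(a)|^{n}$.

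With this moment bound on the maximum in hand, the Markov-inequality calculation (\ref{Eq:Lamom2})--(\ref{Eq:Lamom3}) in the proof of Lemma \ref{for-all-u} applies with $A_5$ replaced by $A_5\,\sup_n K_n^{1/n}$. Choosing $n=\lfloor u^{2\alpha/(\alpha+2H)}\rfloor$ with $u = x/(A_6'(b-a)^{H(\alpha-1)/\alpha})$ for a suitable $A_6'$ then produces the stretched-exponential tail bound (\ref{Eq:maxtailUPB}) for all $x$ exceeding some $x_0=x_0(H,\alpha)$; the regime $x\le x_0$ is handled by making the prefactor $A_7$ large enough that the right-hand side of (\ref{Eq:maxtailUPB}) exceeds $1$ there. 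The main obstacle is the control of $K_n$: any super-exponential growth would corrupt the exponent $A_8$, but the observation that $\delta_n/n$ is a fixed constant in $(0,1)$ makes $K_n^{1/n}$ bounded, so the stretched-exponential exponent from Lemma \ref{for-all-u} survives intact after passing to the maximum and only the prefactor is affected.
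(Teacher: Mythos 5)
Your proof is correct, and it reaches the theorem by a genuinely different route through the same reference. The paper applies the \emph{probability} form of the Moricz--Serfling--Stout maximal inequality (their Theorem 2.2): it takes the exponential tail bound of Lemma \ref{for-all-u} for each partial sum $S(j,k)=Z^H(t_{n,k})-Z^H(t_{n,j})$, checks that $g(j,k)=A_4\bigl((b-a)(k-j)/n\bigr)^{r}$ with $r=2H(\alpha-1)/(\alpha+2H)$ is quasi-superadditive with index $Q=2^{1-r}$ (concavity of $t\mapsto t^r$), and lets MSS82 convert the individual tail bounds directly into a tail bound for the maximum, producing $A_7$ and $A_8$ in one stroke. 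You instead use the \emph{moment} form of the maximal inequality with the additive (hence superadditive, $Q=1$) majorant $g(s,t)=\lambda_n(t-s)$ and exponent $\delta_n=nH(\alpha-1)/\alpha$, and then rerun the Markov-plus-optimization step of Lemma \ref{for-all-u} at the level of the maximum. The price of your route is that the maximal-inequality constant $K_n=(1-2^{(1-\delta_n)/n})^{-n}$ now depends on the moment order $n$, and you correctly identify and resolve the one point where the argument could break: since $\delta_n/n=H(\alpha-1)/\alpha$ is a fixed constant in $(0,1)$, $K_n^{1/n}$ is bounded (for $n$ large enough that $\delta_n>1$), so it only inflates $A_5$ by a constant factor and leaves the stretched-exponential exponent intact; the finitely many small values of $u$ (equivalently $x\le x_0$) are absorbed into $A_7$, exactly as the presence of the prefactor in the theorem permits. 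The passage from the discrete maximum over refining partitions to the continuous supremum via path continuity and monotone convergence is the same in both arguments. What your approach buys is a more elementary and classical ingredient (Moricz's moment maximal inequality with a genuinely superadditive structure function) at the cost of tracking the $n$-dependence of the constant; what the paper's buys is avoiding that bookkeeping entirely by working with the exponential bounds directly, at the cost of invoking the less standard quasi-superadditive probability version of MSS82.
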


\begin{proof}\ For any integer $n \ge 2$, we divide the interval $[a,\, b]$ into $n$
subintervals of  length $(b-a)/n$. Let $t_{n, i}= a + \frac{i(b-a)}n$ ($ i\in
\{0, 1, \ldots,n\}$) be the end-points of these subintervals. By the sample path
continuity of $Z^H$, it suffices to show that
\begin{equation}\label{Eq:maxtailUB1}
\P \bigg\{\max_{1\le i \le n}|Z^H(t_{n, i})-Z^H(a)|> x\bigg\}
\leq A_7\, \exp\left(- A_8\, \frac{x^{2\alpha/(\alpha+2H)}}
{(b-a)^{2H(\alpha-1)/(\alpha + 2H)}}\right)
\end{equation}
for all integers $n \ge 2$.

To this end, we define the random variables $\xi_i = Z^H(t_{n, i+1}) - Z^H(t_{n, i})$ for
$i \in \{0, 1, \ldots, n-1\}$. Then for all integers $0 \le j < k \le n$, we have
\begin{equation}\label{Eq:Incre}
Z^H(t_{n, k}) - Z^H(t_{n, j}) = \sum_{i=j}^{k-1} \xi_i : = S(j, k).
\end{equation}
Applying Lemma \ref{for-all-u}, we see that for all integers $j < k$,
\begin{equation}\label{Eq:maxtailUB2}
\P \big\{|S(j, k)| > x\big\}
\leq \exp\left(- A_4\, \bigg(\frac{n} {(b-a)(k-j)}\bigg)^{2H(\alpha-1)/(\alpha + 2H)}
\, x^{2\alpha/(\alpha+2H)}\right).
\end{equation}
Using the notation in \cite{MSS82}, we denote $\phi(x) = x^{2\alpha/(\alpha+2H)}$ and
$$
g(j, k) = A_4\, \bigg(\frac {(b-a)(k-j)} {n} \bigg)^{2H(\alpha-1)/(\alpha + 2H)}.
$$
For simplicity denote $r = 2H(\alpha-1)/(\alpha + 2H)$. Since $r \in (0,  1)$, the concavity
of the function $t \mapsto t^r$ implies that for all integers $1 \le i \le j < k\le n$,
\begin{equation}\label{Eq:Q-super}
 g(i, j) + g(j+1, k) \le 2^{1 - r }\, g(i, k).
\end{equation}
Hence the function $g$ satisfies the property of \emph{quasi-superadditivity} with
index $Q=2^{1 - r }$ in \cite{MSS82}. Moreover, the functions $\phi$ and $g$ satisfy
all the other conditions of Theorem 2.2 in Moricz, et al. \cite{MSS82}. Consequently,
the latter implies the existence of positive and finite constants $A_7$ and $A_8$
(depending on $H, \alpha, \nu$  and $\chi$ only) such
that (\ref{Eq:maxtailUB1}) holds. This proves Theorem \ref{Thm:maxtail}.
\end{proof}

\begin{remark}
Note that, when $H=1/2$, one can apply the reflection principle of Brownian motion
and conditioning to prove Theorem \ref{Thm:maxtail}. Our method is much more general.
\end{remark}

\section{Applications}

Applying the large deviation results in the previous section, we establish uniform and local
moduli of continuity for $Z^H$.

\begin{theorem}\label{main-thm}
Let $Z^H= \{Z^H(t), t \ge 0\}$ be an $\a$-stable local time $H$-fractional
Brownian motion with values in $\rr$. Then there exists a finite constant $A_9>0$
such that for all constants $0\le a<b <\infty$, we have
\begin{equation}\label{Eq:uniform}
\limsup_{h\downarrow 0} \sup_{a \leq t\leq b -h}\sup_{0\leq
s\leq h} \frac{\big|Z^H(t+s)-Z^H(t) \big|}{h^{H(\alpha-1)/\alpha}\big(
\log 1/h \big)^{(\alpha+2H)/(2\alpha)}}\leq A_9 \quad \ \
\mathrm{a.s.}
\end{equation}
\end{theorem}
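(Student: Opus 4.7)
The plan is to deduce the uniform modulus of continuity by a standard discretization/chaining argument, using the maximal tail bound in Theorem~\ref{Thm:maxtail} as the sole probabilistic input. In the statement of Theorem~\ref{Thm:maxtail}, the increment $|Z^H(b)-Z^H(a)|$ scales like $(b-a)^{H(\alpha-1)/\alpha}$ with Gaussian-like tails of order $\exp(-c x^{2\alpha/(\alpha+2H)})$, so one expects a logarithmic correction of exponent $(\alpha+2H)/(2\alpha)$, exactly as in \eqref{Eq:uniform}.

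First, I would introduce the dyadic mesh $h_n = 2^{-n}$ and partition $[a,b]$ into $N_n \asymp h_n^{-1}$ adjacent subintervals $I_{n,k} = [t_{n,k}, t_{n,k+1}]$ of length $h_n$. For the target level
\[
x_n = C\, h_n^{H(\alpha-1)/\alpha}\,\big(\log (1/h_n)\big)^{(\alpha+2H)/(2\alpha)},
\]
with $C > 0$ a constant to be chosen large, Theorem~\ref{Thm:maxtail} applied on each $I_{n,k}$ gives
\[
\P\bigg\{\max_{t \in I_{n,k}} |Z^H(t)-Z^H(t_{n,k})| > x_n\bigg\} \le A_7\, h_n^{A_8\, C^{2\alpha/(\alpha+2H)}},
\]
since the exponent $x_n^{2\alpha/(\alpha+2H)}/h_n^{2H(\alpha-1)/(\alpha+2H)}$ simplifies to $C^{2\alpha/(\alpha+2H)}\log(1/h_n)$. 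A union bound over the $N_n \asymp h_n^{-1}$ subintervals followed by Borel--Cantelli, applied at the summable level $C$ chosen so that $A_8 C^{2\alpha/(\alpha+2H)} > 2$, then guarantees that almost surely, for all sufficiently large $n$,
\[
\max_{0 \le k < N_n}\, \max_{t \in I_{n,k}} |Z^H(t)-Z^H(t_{n,k})| \le x_n.
\]

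Next I would pass from the dyadic scale to an arbitrary small $h$. Given $h \in (h_{n+1}, h_n]$ and any $t \in [a,b-h]$, $0 \le s \le h$, the points $t$ and $t+s$ lie in at most two adjacent subintervals $I_{n,k}$, so by the triangle inequality $|Z^H(t+s)-Z^H(t)| \le 2 x_n$. On the other hand, because $h \ge h_n/2$ and $h \le h_n$, the normalizing factor satisfies
\[
h^{H(\alpha-1)/\alpha}\,\big(\log(1/h)\big)^{(\alpha+2H)/(2\alpha)} \ge 2^{-H(\alpha-1)/\alpha}\, h_n^{H(\alpha-1)/\alpha}\,\big(\log(1/h_n)\big)^{(\alpha+2H)/(2\alpha)} = 2^{-H(\alpha-1)/\alpha}\, x_n/C.
\]
Dividing, the ratio in \eqref{Eq:uniform} is almost surely eventually bounded by $A_9 := 2^{\,1 + H(\alpha-1)/\alpha}\, C$, which is the desired conclusion.

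The main obstacle is really bookkeeping rather than substance: one must verify that the summability index $A_8 C^{2\alpha/(\alpha+2H)}$ is strictly greater than $1$ (to defeat the $N_n \asymp h_n^{-1}$ factor in the union bound, so that $\sum_n A_7 h_n^{A_8 C^{2\alpha/(\alpha+2H)} - 1} < \infty$), which is handled by taking $C$ large, and that the passage from dyadic $h_n$ to arbitrary $h$ only loses a multiplicative constant in both numerator and denominator. Both points are routine and the result holds with $A_9$ depending only on $H$, $\alpha$, $\nu$, $\chi$, as asserted.
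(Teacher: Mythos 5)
Your argument is correct in substance and yields the stated result, but it is not the route the paper takes. The paper's proof is a one-line appeal to general machinery: it uses only the pointwise increment tail bound \eqref{Eq:Lamom4} (not the maximal inequality) and observes that $Z^H$ then satisfies the hypotheses of Lemmas 2.1--2.2 and Theorem 3.1 of Cs\'aki and Cs\"org\H{o} \cite{csaki-csorgo} with $\sigma(h)=h^{H(\alpha-1)/\alpha}$ and $\beta=2\alpha/(\alpha+2H)$; the discretization and Borel--Cantelli work is thus outsourced to that reference. You instead carry out the dyadic chaining yourself, taking Theorem \ref{Thm:maxtail} as the probabilistic input. Both are valid; your version is self-contained modulo Theorem \ref{Thm:maxtail} (which the paper proves via Moricz et al.), while the paper's version needs only the weaker, non-maximal bound \eqref{Eq:Lamom3} because the Cs\'aki--Cs\"org\H{o} theorem internalizes the maximum. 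One small slip in your interpolation step: when $t\in I_{n,k}$ and $t+s\in I_{n,k+1}$, the triangle inequality passes through both endpoints $t_{n,k}$ and $t_{n,k+1}$ and gives $|Z^H(t+s)-Z^H(t)|\le 3x_n$, not $2x_n$ (you need $|Z^H(t)-Z^H(t_{n,k})|+|Z^H(t_{n,k+1})-Z^H(t_{n,k})|+|Z^H(t+s)-Z^H(t_{n,k+1})|$). This only changes the final constant to $A_9=3\cdot 2^{H(\alpha-1)/\alpha}C$ and does not affect the validity of the proof.
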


\begin{proof}
For every $t \ge 0$ and $h > 0$, it follows from (\ref{Eq:Lamom3}) that
\begin{equation}\label{Eq:Lamom4}
\P\big\{|Z^H(t+h)-Z^H(h)| > A_6\,h^{H(\alpha-1)/\alpha}\, u \big\} \leq
\exp \left(- u^{2\alpha/(\alpha+2H)} \right).
\end{equation}
Hence $Z^H = \{Z^H(t), t \ge 0\}$ satisfies the conditions of
Lemmas 2.1 and 2.2 in \cite{csaki-csorgo} with $\sigma(h) =
h^{H(\alpha-1)/\alpha}$ and $\beta = \frac{2\alpha} {\alpha+2H}$.
Consequently (\ref{Eq:uniform}) follows
directly from Theorem 3.1 in \cite{csaki-csorgo}.
\end{proof}

Cs\'{a}ki,  F\"{o}ldes and R\'{e}v\'{e}sz \cite{CCFR2} obtained a
Strassen type law of the iterated logarithm (LIL) for $Z(t) = W(L_t)$
when $L_t$ is the local time at zero of a \emph{symmetric} stable L\'{e}vy
process (see Theorem 2.4 in \cite{CCFR2}). Part (i) of the following theorem
extends partially their result to $Z^H$ and Part (ii) describes the local
oscillation of $Z^H$ in the neighborhood of any fixed point.

\begin{theorem}\label{LIL}
Let $Z^H= \{Z^H(t), t \ge 0\}$ be an $\a$-stable local time $H$-fractional
Brownian motion with values in $\rr$. The following statements hold:

{\rm (i)}\ Almost surely,
\begin{equation}\label{Eq:LIL1}
\limsup_{t\to\infty}\frac{\max_{0\le s \le t} \big|Z^H(s)\big|}
{t^{H(\alpha-1)/\alpha}\big(\log\log t\big)^{(\alpha+2H)/(2\alpha)}}
\leq A_8^{-(\alpha +2H)/(2\alpha)}.
\end{equation}

{\rm (ii)}\ For every $t > 0$, almost surely
\begin{equation}\label{Eq:LIL2}
\limsup_{h \to 0}\frac{\max_{|s| \le h} \big|Z^H(t+s) - Z^H(t)\big|}
{h^{H(\alpha-1)/\alpha}\big(\log\log 1/h\big)^{(\alpha+2H)/(2\alpha)}}
\leq A_8^{-(\alpha +2H)/(2\alpha)}.
\end{equation}
In the above, $A_8$ is the constant in (\ref{Eq:maxtailUPB}).
\end{theorem}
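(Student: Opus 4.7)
The plan is to derive both statements from the maximal tail bound in Theorem \ref{Thm:maxtail} by a standard Borel--Cantelli argument along a geometric sequence of scales, together with the monotonicity of $\max_{a\le t\le b}|Z^H(t)-Z^H(a)|$ in $b$. In each case the exponent $A_8^{-(\alpha+2H)/(2\alpha)}$ appearing in the statement is exactly the critical constant $c$ at which $A_8 c^{2\alpha/(\alpha+2H)} = 1$, so the argument is essentially lossless provided one lets the geometric ratio tend to $1$.

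For part (i), fix $\theta>1$ and $\eps>0$, set $c = (1+\eps)A_8^{-(\alpha+2H)/(2\alpha)}$, and let $t_n=\theta^n$. Applying Theorem \ref{Thm:maxtail} with $a=0$, $b=t_n$ and $x = c\, t_n^{H(\alpha-1)/\alpha}(\log\log t_n)^{(\alpha+2H)/(2\alpha)}$ gives
\[
\P\Bigl\{\max_{0\le s\le t_n}|Z^H(s)|>x\Bigr\}\le A_7\exp\bigl(-A_8 c^{2\alpha/(\alpha+2H)}\log\log t_n\bigr)=A_7(n\log\theta)^{-(1+\eps)^{2\alpha/(\alpha+2H)}},
\]
which is summable in $n$. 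By Borel--Cantelli, almost surely for all large $n$,
\[
\max_{0\le s\le t_n}|Z^H(s)|\le c\, t_n^{H(\alpha-1)/\alpha}(\log\log t_n)^{(\alpha+2H)/(2\alpha)}.
\]
For arbitrary $t\in[t_n,t_{n+1}]$, monotonicity of the running maximum gives $\max_{0\le s\le t}|Z^H(s)|\le \max_{0\le s\le t_{n+1}}|Z^H(s)|$, while $t_{n+1}\le\theta t$ produces an extra factor $\theta^{H(\alpha-1)/\alpha}$ on the normalization. Letting first $\eps\downarrow 0$ and then $\theta\downarrow 1$ yields \eqref{Eq:LIL1}.

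For part (ii), fix $t>0$, $\theta>1$, $\eps>0$, set $c$ as above, and let $h_n=\theta^{-n}$. Applying Theorem \ref{Thm:maxtail} twice, on $[t,t+h_n]$ and on $[t-h_n,t]$ (the latter for $n$ large enough that $t-h_n>0$), we get
\[
\P\Bigl\{\max_{|s|\le h_n}|Z^H(t+s)-Z^H(t)|>c\,h_n^{H(\alpha-1)/\alpha}(\log\log 1/h_n)^{(\alpha+2H)/(2\alpha)}\Bigr\}\le 2A_7(n\log\theta)^{-(1+\eps)^{2\alpha/(\alpha+2H)}}.
\]
This is again summable, so Borel--Cantelli gives the bound along $h_n$. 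For $h\in[h_{n+1},h_n]$, the maximum over $|s|\le h$ is dominated by the maximum over $|s|\le h_n$, while $h_n\le\theta h$ and $\log\log 1/h_n\sim\log\log 1/h$; sending $\eps\downarrow 0$ and $\theta\downarrow 1$ completes the proof.

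There is no serious obstacle here; Theorem \ref{Thm:maxtail} already packages the maximal inequality we need. The only points requiring care are: correctly identifying the critical constant $c$ at which $A_8c^{2\alpha/(\alpha+2H)}=1$ (so that any strictly larger $c$ produces summable probabilities), and handling the passage from the discrete geometric scale to the continuous one without losing the constant, which is accomplished by letting $\theta\downarrow 1$ after the almost sure bound is established.
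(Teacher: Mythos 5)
Your proposal is correct and follows essentially the same route as the paper: a Borel--Cantelli argument along geometric scales $t_n=\theta^n$ (the paper uses $T_n=\rho^n$ with threshold $\gamma>A_8^{-1}$ in place of your $c=(1+\varepsilon)A_8^{-(\alpha+2H)/(2\alpha)}$, which is the same parametrization), followed by monotone interpolation between scales and the limits $\theta\downarrow 1$, $\varepsilon\downarrow 0$. The one point in part (ii) worth stating explicitly is that Theorem \ref{Thm:maxtail} as written anchors increments at the \emph{left} endpoint $a$, so on $[t-h_n,t]$ you need its mirror-image version bounding $\max_{a\le s\le b}|Z^H(s)-Z^H(b)|$ --- which holds with the same constants, since Lemma \ref{for-all-u} is symmetric in the endpoints and the M\'oricz--Serfling--Stout inequality applies to the reversed increment sequence --- rather than a triangle-inequality patch through $Z^H(t-h_n)$, which would degrade the constant by a factor of $2$.
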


\begin{proof}\ Since both (\ref{Eq:LIL1}) and (\ref{Eq:LIL2}) follow from
Theorem  \ref{Thm:maxtail} and a standard Borel-Cantelli argument, we only
prove (\ref{Eq:LIL1}).

Fix two arbitrary constants $\gamma > A_8^{-1}$ and $\rho > 1$. For every
integer $n \ge 1$, let
$T_n = \rho^n$ and consider the event
\[
E_n=\left\{\omega :\ \max_{0\leq t\leq T_n} \big|Z^H(s)\big|
>  T_n^{H(\alpha-1)/\alpha}\, U(T_n)\right\},
\]
where $U(t)=(\gamma\, \log \log t)^{(\alpha + 2H)/(2\alpha)}$.
It follows from Theorem \ref{Thm:maxtail} that
\begin{equation}\label{Eq:En}
\begin{split}
\P(E_n) &\le A_7\, \exp\left(- A_8\, \frac{\big(T_n^{H(\alpha-1)/\alpha}
\, U(T_n)\big)^{2\alpha/(\alpha+2H)}}
{T_n^{2H(\alpha-1)/(\alpha + 2H)}}\right)
 \le  A_{10}\, n^{- A_8 \gamma}.
\end{split}
\end{equation}
Since $A_8 \gamma> 1$, we have $\sum_{n=1}^{\infty}\P(E_n)<\infty$. The
Borel-Cantelli lemma implies that
\begin{equation}\label{upper-bound}
\limsup_{n\to\infty}\frac{\max_{0\leq s\leq T_n} \big|Z^H(s)\big|}
{T_n^{H(\alpha-1)/\alpha}\, U(T_n)} \leq 1\quad \ \hbox{ a.s.}
\end{equation}

Note that $T_{n+1}/T_n=\rho$ for every $n \ge 1$. We use the
monotonicity to derive that
for all $t\in [T_n,\, T_{n+1}]$,
\begin{equation}\label{Eq:Compare}
\frac{\max_{0\leq s\leq t} \big|Z^H(s)|} {t^{H(\alpha-1)/\alpha}
U(t)} \leq \rho^{H(\alpha-1)/\alpha}\, \frac{\max_{0\leq s\leq
T_{n+1}} \big|Z^H(s)\big|}{T_{n+1}^{H(\alpha-1)/\alpha}\ U(T_{n+1})}\,
\frac{U(T_{n+1})}{U(T_n)}.
\end{equation}
Equations (\ref{upper-bound}) and (\ref{Eq:Compare}) imply
\begin{equation}\label{Eq:LIL3}
\limsup_{t\to\infty }\frac{\max_{0\leq s\leq t} \big|Z^H(s)\big|}
{t^{H(\alpha-1)/\alpha}\big(\log\log t\big)^{(\alpha+2H)/(2\alpha)}}
\leq \rho^{H(\alpha-1)/\alpha}\,\gamma^{(\alpha+2H)/(2\alpha)}\quad \
\hbox{ a.s.}
\end{equation}
We obtain (\ref{Eq:LIL1}) from (\ref{Eq:LIL3}) by letting $\gamma \downarrow
A_8^{-1}$ and $\rho \downarrow 1$ along rational numbers.
\end{proof}

\begin{remark}
We believe that, up to a constant factor, both uniform and local moduli
of continuity of $Z^H(t)$ are sharp. However, we have not been able to
prove this due to the lack of information on the dependence structure
of the process $Z^H$.
\end{remark}

\bigskip

\end{document}